\documentclass[12pt]{amsart}
\usepackage{amsmath,amscd,amsthm,amsfonts, amssymb,amsxtra, mathrsfs,enumerate,relsize}
\usepackage{calrsfs}
\usepackage{caption}
\usepackage[us,12hr]{datetime}
\usepackage[all]{xy} \SelectTips{cm}{}
\usepackage{hyperref}
 \hypersetup{colorlinks=true,citecolor=blue}
   \usepackage{graphicx}
     \usepackage[margin=1.5in]{geometry}
   \usepackage[font=scriptsize]{caption}
\usepackage{todonotes}
\usepackage{tikz}
\usetikzlibrary{shapes.geometric,decorations.pathreplacing}
\usetikzlibrary{positioning}
\CDat
\usepackage{scalerel}[2016/12/29]

\subjclass{Primary: 57R65, 57P10, 58D10. Secondary: 57N65, 55P91, 55Q91}
\newtheorem{thm}{Theorem}[section]  
\newtheorem*{un-no-thm}{Theorem}
\newtheorem{lem}[thm]{Lemma}         

\newtheorem{bigthm}{Theorem}

\theoremstyle{definition}
\newtheorem{defn}[thm]{Definition}   

\theoremstyle{definition}

\theoremstyle{definition}

\theoremstyle{remark}
\newtheorem{rem}[thm]{Remark}        
\newtheorem{term}[thm]{Terminology}

\newtheorem*{ack}{Acknowledgement}
\newtheorem*{out}{Outline}
\newtheorem{rems}[thm]{Remarks}
 
\newtheorem{ex}[thm]{Example}

\DeclareMathOperator{\Sp}{Spectra}

\DeclareMathOperator{\tr}{tr}

\DeclareMathOperator{\map}{map}
\DeclareMathOperator*{\colim}{colim}
\DeclareMathOperator*{\rel}{rel}

\begin{document}
\title{Embedding, compression, and the relative Hopf invariant}
\date{\today}
\author{John R.\ Klein}
\address{Wayne State University, Detroit, MI 48202}
\email{klein@math.wayne.edu}
\begin{abstract}  
We establish Poincaré embedding results in the relative setting, generalizing previously known results in the absolute case. 
Our primary motivation comes from applications to non-simply connected Poincaré surgery, which will be developed in a forthcoming paper. 
Along the way, we introduce a new tool: the {\it relative Hopf invariant} in the equivariant setting.
\end{abstract}
\maketitle
\setlength{\parindent}{15pt}
\setlength{\parskip}{1pt plus 0pt minus 1pt}

\def\Sp{\text{\bf Sp}}
\def\vo{\varOmega}
\def\vs{\varSigma}
\def\smsh{\wedge}
\def\flush{\flushpar}
\def\id{\text{id}}
\def\dbslash{/\!\! /}
\def\codim{\text{\rm codim\,}}
\def\:{\colon}
\def\holim{\text{holim\,}}
\def\hocolim{\text{hocolim\,}}
\def\Bbb{\mathbb}
\def\bold{\mathbf}
\def\Aut{\text{\rm Aut}}
\def\cal{\mathcal}
\def\sec{\text{\rm sec}}
\def\gda{G\text{\rm -}\delta\text{\rm -}\alpha}
\def\PDD{\text{\rm pd\,}}
\def\PD{\text{\rm P}}
\def\stableto {\,\, \mapstochar \!\!\to}

\setcounter{tocdepth}{1}
\tableofcontents
\addcontentsline{file}{sec_unit}{entry}

\section{Introduction}
In the 1960s, during the ``golden era'' of manifold theory, a new approach to constructing embeddings of manifolds emerged. \cite{Browder_ICM},
\cite{Haefliger_knotted}, \cite[ch.~11]{Wall}. 
The basic geometric picture is as follows. Let $N \subset M$ be a closed submanifold of codimension at least three. 
A compact tubular neighborhood $U$ of $N$ determines a decomposition
\[
M \cong U \cup_{\partial U} C,
\]
where $C$ is the closure of the complement $M - U$. Thus $(U,\partial U)$ and $(C,\partial U)$ are manifolds with boundary, and
together they exhibit $M$ as a codimension-one splitting.

The surgery approach to manifold classification problems shows that to find the geometric codimension one splitting, it is often enough to construct an analogous {\it homotopical} codimension one splitting
by Poincar\'e pairs. More precisely, we seek to identify $M$ as a homotopy pushout of Poincar\'e pairs glued along a common boundary. 
 This is essentially the definition of a \emph{Poincar\'e embedding} of $N$ in $M$. The corresponding homotopy theoretic problem is to construct such a homotopy pushout.
  
A variant of the above homotopy theoretic
problem is to start with a Poincar\'e space $M$ of dimension $d$ and a Poincar\'e pair $(U,\partial U)$ 
of dimension $d$. 
One then asks if there is another Poincar\'e pair $(C,\partial U)$ such that
the union $U \cup_{\partial U} C$ is homotopy equivalent to $M$. This is known as a {\it codimension zero Poincar\'e embedding}
of $(U,\partial U)$ in $M$. 

In \cite{Klein_compression} we proved various results about codimension zero Poincar\'e embeddings. The current article is a sequel: We
provide relative versions of two of the main results of \cite{Klein_compression} and consider two applications.
The idea in this case is to start with Poincar\'e pairs $(U,\partial U)$
and  $(M,\partial M)$ of the same dimension, where a ``portion'' of the boundary $\partial U$ is already embedded inside $\partial M$.
The goal will be to extend the latter to an embedding of $U$ in $M$.
Although the results are interesting in their right, the main reason  for writing this paper is that relative results are typically more useful 
than absolute ones in applications. In fact,
we will use the results contained here  in another paper
to complete the program of ``Poincar\'e surgery'' in the non-simply connected case.

\subsection{Relative Poincar\'e embeddings} Let 
\[
(P;\partial_0 P,\partial_1 P)
\] be a $d$-dimensional Poincar\'e triad. That is,
$(P,\partial P)$ is a Poincar\'e pair of dimension $d$ equipped with a Poincar\'e splitting of its boundary as
\[
\partial P = \partial_0 P \cup_{\partial_{01} P} \partial_1 P\, ,
\]
in which $(\partial_0 P, \partial_{01} P)$ and $(\partial_0 P, \partial_{01} P)$ are Poincar\'e pairs of dimension $d-1$.
Note that $\partial_{01} P = \partial_0 P \cap \partial_1 P$.

\begin{defn}[{\cite[p.~764]{GK}}] \label{defn:spine} The {\it homotopy spine dimension} of $(P;\partial_0 P,\partial_1 P)$ (relative to $\partial_0 P$) is $\le p$ 
if
\begin{enumerate}[(i).]
\item the pair $(P,\partial_0 P)$ is homotopically $p$-dimensional (i.e., up to homotopy, $(P,\partial_0 P)$ is a retract of a CW pair of relative dimension $\le p$), and
\item the pair $(P,\partial_1 P)$ is $(d-p-1)$-connected.
\end{enumerate}
\end{defn}

\begin{rem} When $\partial_0 P = \emptyset$, the homotopy spine dimension of $P$ is $\le p$ if and only if
it has homotopy codimension $\ge d-p$ in the sense of \cite[p.~312]{Klein_compression}.
\end{rem}

Suppose that $(M;\partial_0 M,\partial_1 M)$ is another Poincar\'e triad of dimension $d$, in which 
 \[
 (\partial_0 M,\partial_{01} M)  = (\partial_0 P,\partial_{01} P) \, .
 \]
 Then the data amount to
a Poincar\'e embedding of a ``portion'' (i.e., $\partial_0 P$) of $\partial P$ in $\partial M$.

\begin{defn}[{\cite[\S2]{GK}}]
A (codimension zero)  {\it Poincar\'e embedding} of $P$ in $M$ (relative to $\partial_0 P$) consists of a $d$-dimensional Poincar\'e triad
\[
(C; \partial_1 P, \partial_1 M)
\]
and a homotopy equivalence 
\[
h\: P\cup_{\partial_1 P} C @> \simeq >> M
\]
which restricts to the identity on $\partial M = \partial_0 P \cup_{\partial_{01} P} \partial_1 M$. 

 The Poincar\'e triad $(C;\partial_1 P,\partial_1 M)$  is called the  {\it complement} of the Poincar\'e embedding.
 
 The composition
\[
f\: (P,\partial_0 P) @>\subset >> (P\cup_{\partial_1 P} C,\partial_0 P) @> h>{}^\simeq > (M,\partial_0 P)
\]
is called the {\it underlying map} of the Poincar\'e embedding. 
\end{defn}

\begin{term} From now on, we use the terms {\it embedding}  and {\it Poincar\'e embedding} interchangeably.
Informally, we declare that a map $f\: P \to M$ (extending the identity
on $\partial_0 P$) {\it embeds} if there is an embedding of $P$ in $M$ (relative to $\partial_0 P$)
with underlying map $f$. When  $(C; \partial_1 P, \partial_1 M)$ is understood,
we sometimes refer to the  embedding by its underlying map $f$.
\end{term}

\begin{figure}
\centering
\begin{tikzpicture}[scale=.8]

\fill[blue!15] (-3,0) rectangle (3,6);
\draw[thick, black] (-3,0) rectangle (3,6);

\draw[thick, blue, domain=-2:2, samples=100] 
  plot (\x, {-(\x)^2 + 4});


\begin{scope}
\clip (-3,0) rectangle (3,6);
\fill[magenta!40, opacity=0.9] 
  plot[domain=2:-2, samples=100] (\x, {-(\x)^2 + 4}) 
  -- cycle;
\end{scope}

\draw[thick, blue] (-2,0) -- (2,0);

\node[font=\fontsize{3pt}{3.6pt}\selectfont] at (-1.2,3.2) {$\partial_1 \! P$};
\node[font=\fontsize{3pt}{3.6pt}\selectfont] at (0,.15) {$\partial_0 \! P$};
\node[font=\tiny] at (0,5) {$C$};
\node[font=\tiny] at (0,2) {$P$};
\end{tikzpicture}
\caption{A depiction of a Poincar\'e embedding of $P$ in $M$ relative to $\partial_0 P$, where $\partial_1 M$ is the 
the unlabeled part of the boundary.} 
\label{fig:embedding}
\end{figure}
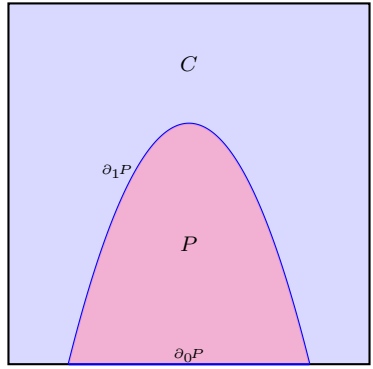

\begin{rem} When $\partial_0 P = \emptyset$, the definition reduces to the one of \cite[\S2.4]{Klein_compression}.
In this instance we call it an {\it interior embedding}.
\end{rem}

\begin{rem} It is sometimes convenient to display a Poincar\'e embedding as a commutative homotopy pushout square of pairs
\[
\xymatrix{
(\partial_1 P, \partial_{01} P) \ar[r] \ar[d] & (C,\partial_1 M) \ar[d] \\
(P,\partial_0 P) \ar[r] & (M,\partial M)\, .
}
\]
\end{rem}

\begin{ex} A codimension one smooth embedding $P \to M$ of manifold triads 
determines a Poincar\'e embedding provided that $\partial_1 P$ meets $\partial M$ transversely.
\end{ex}

\subsection{The space of Poincar\'e embeddings}  
Let 
\[
\cal I^h(\partial M)
\] be the category whose objects are Poincar\'e pairs
$(N,\partial N)$ such that $\partial N = \partial M$. A morphism 
$(N,\partial N) \to (N',\partial N')$ is given by a homotopy equivalence $N \to N'$ that restricts to the identity map on  $\partial N = \partial M =  \partial N'$.
It will be convenient in the following to refer to an object $(N,\partial N)$ as $N$.

One has a functor
\[
\kappa^P  \: \cal I^h(\partial_1 P \cup_{\partial_{01} P} \partial_1 M) \to \cal I^h(\partial M)
\]
defined by $C\mapsto C \cup_{\partial_1 P} P$. An object of the over category
$\kappa^P{/M}$ is the same thing as a relative embedding $P\to M$.

\begin{defn} The {\it space of relative embeddings} from $P$ to $M$ is the geometric realization
\[
E(P,M) = |\kappa^P/M|
\] 
 (cf.~\cite[defn.~2.8]{GK}).

Two embeddings of $P$ in $M$ relative to $\partial_0 P$
are said to be {\it concordant} if they lie in the same path component of $E(P,M)$.
\end{defn}

\subsection{Some fiberwise notions} Let $T$ be the Quillen model category of compactly generated weak Hausdorff spaces.
A map $X\to Y$ in $T$ is a weak equivalence (fibration) if and only if it is a weak homotopy equivalence (resp.~Serre fibration). It is
a cofibration if and only if it possesses the left lifting property with respect to the acyclic fibrations. 
For a space $X$, let $T_{/X}$ be the model category of spaces over $X$ and let $R(X)$ be the model category of retractive spaces over $X$ (cf.~\S\ref{sec:prelim}). 
If  $A\to B$ is a cofibration of $T_{/X}$, then obtain the fiberwise quotient
\[
B/\!\!/ A := B \cup_A X \in R(X)\, .
\]
As a special case, we have $B^+ := B/\!\!/\emptyset = B \amalg X$. For a cofibration $A \to Y$ of $R(X)$ and a fixed morphism $\phi\: A\to Z$ we let
\[
[Y,Z; \rel A]_{R(X)}
\]
be the set of fiberwise homotopy classes of maps $Y \to Z$ which restrict to $\phi$.
Similarly, one has the set of fiberwise stable homotopy classes of maps $Y \to Z$ that restrict to $\phi$:
\[
\{Y,Z; \rel A\}_{R(X)}\, .
\]
Let $E\: [Y,Z; \rel A]_{R(X)} \to \{Y,Z;\rel A\}_{R(X)}$ be the stabilization map.

\subsection{The Compression Theorem}
Let $(P;\partial_0 P,\partial_1 P)$ and $(M;\partial_0 M,\partial_1 M)$, be $d$-dimensional Poincar\'e
triads as above, with $(\partial_0 P,\partial_{01} P) = (\partial_0 M,\partial_{01} M)$.

Let $I = [0,1]$ be the unit interval and let $J = [1/3, 2/3]$. 
Then the inclusion $j\: J @>>> I$ induces an interior  embedding
\[
\partial_0 P \times J \subset \partial_0 P \times I  \subset \partial (M \times I)
\]
defining a codimension one splitting
\[
(\partial (M \times I); \partial_0 P \times J,C)\, .
\]
Moreover, if there is a relative embedding $f\: P \to M$, then
we have an induced embedding $f\times j\: P \times J \to M \times I$
relative to $\partial_0 P \times J$. In this case, one says that $f\times j$ is the {\it decompression} of
$f$.

Conversely, suppose one is provided with an embedding $P\times J \to M\times I$ relative to 
$\partial_0 P \times J$, i.e., there is a codimension one splitting of $P \times J$ in $M\times I$:
\[
(M\times I; P\times J,W)
\]
with $(P \times J) \cap \partial (M \times I) =  \partial_0 P \times J$ and $W \cap \partial (M \times I) = C$ (cf.~Figure \ref{fig:to-be-compressed}).
We would like to decide when the latter is concordant to a decompression of a relative  embedding $P\to M$.

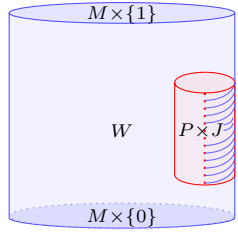
\begin{figure}
\centering
\begin{tikzpicture}[scale=1]

\node[
  cylinder,
  draw=blue!70,
  minimum width=3cm,
  minimum height=3cm,
  shape border rotate=90,
  cylinder uses custom fill,
  cylinder body fill=blue!20,
  cylinder end fill=blue!40,
  fill opacity=0.25
] (big) at (0,0) {};

\draw[dotted,
  blue!60,
  fill=blue!30,
  fill opacity=0.35
]
  ([yshift=0.17cm]big.south) ellipse[x radius=1.5cm, y radius=0.17cm];

\node[
  cylinder,
  draw=red,
  minimum width=0.8cm,
  minimum height=1.5cm,
  shape border rotate=90,
  cylinder uses custom fill,
  cylinder body fill=red!20,
  cylinder end fill=red!30,
  fill opacity=0.2
] (small) at (1.1,-0.2) {};

\draw[red, dotted, line width=.8pt]
  ([yshift=0.60cm]small.center) -- ([yshift=-0.65cm]small.center);

\node[font=\relsize{-3}\smaller] at (1.05,-.1) {$P {\times} J$};
\node[font=\relsize{-3}\smaller] at (0,-.1) {$W$};
\node[font=\relsize{-3}\smaller] at (0,1.45) {$M {\times} \{1\}$};
\node[font=\relsize{-3}\smaller] at (0,-1.25) {$M {\times} \{0\}$};

      \draw[blue!60] (1.1,.3) arc[start angle=-90, end angle=0, x radius=.4cm, y radius=.25cm];
    \draw[blue!60] (1.1,.2) arc[start angle=-90, end angle=0, x radius=.4cm, y radius=.25cm];
  \draw[blue!60] (1.1,.1) arc[start angle=-90, end angle=0, x radius=.4cm, y radius=.25cm];
  \draw[blue!60] (1.1,0) arc[start angle=-90, end angle=0, x radius=.4cm, y radius=.25cm];
        \draw[blue!60] (1.35,-.025) arc[start angle=-80, end angle=0, x radius=.15cm, y radius=.1cm];
    \draw[blue!60] (1.35,-.1) arc[start angle=-80, end angle=0, x radius=.15cm, y radius=.1cm];
    \draw[blue!60] (1.1,-.3) arc[start angle=-90, end angle=0, x radius=.4cm, y radius=.25cm];
    \draw[blue!60] (1.1,-.4) arc[start angle=-90, end angle=0, x radius=.4cm, y radius=.25cm];
        \draw[blue!60] (1.1,-.5) arc[start angle=-90, end angle=0, x radius=.4cm, y radius=.25cm];
            \draw[blue!60] (1.1,-.6) arc[start angle=-90, end angle=0, x radius=.4cm, y radius=.25cm];
                  \draw[blue!60] (1.1,-.7) arc[start angle=-90, end angle=0, x radius=.4cm, y radius=.25cm];
                        \draw[blue!60] (1.1,-.8) arc[start angle=-90, end angle=0, x radius=.4cm, y radius=.25cm];
\end{tikzpicture}
\caption{Depiction of a relative Poincar\'e embedding of $P\times J$ in $M \times I$. The ruled portion of $P\times J$ 
represents the embedding of $\partial_0 P\times J$ in $\partial (M\times I)$.}
\label{fig:to-be-compressed}
\end{figure}

Consider the evident map of pairs
\[
(P \times \{1/3\},\partial_0 P \times \{1/3\}) \to (W,C)\, .
\]
Then the map $\partial_0 P \times \{1/3\} \to C$ has a preferred homotopy (over $\partial M \times I$) to a factorization of the form
\[
\partial_0 P \times \{1/3\} \to \partial M \times \{0\} \subset C\, .
\]
Consider $W$ as an object of $R(M)$. Then the map of pairs
determines a fiberwise homotopy class 
\[
\nu \in [P/\!\! /\partial_0 P, W]_{R(M)}\, ;
\]
note that $W$ is an object of $R(M)$ since there is a factorization $M\times\{0\} \to W \to M \times I$. 

\begin{defn} The homotopy class $\nu$ is called the {\it link} of the
embedding $P\times J \to M \times I$.
\end{defn}

Note that if the embedding compresses, then $\nu$ is trivial.

\begin{bigthm}[Compression Theorem] \label{bigthm:compression} Assume that homotopy  spine dimension of $P$ is $\le p$, with $p \le d-3$ and $3p+4\le 2d$.
If $\nu$ is trivial, then up to concordance, the relative embedding $P\times J \to M \times I$ compresses to a relative embedding $P\to M$.
\end{bigthm}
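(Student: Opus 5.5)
We follow the strategy of the absolute Compression Theorem of \cite{Klein_compression} and adapt each step to the relative setting, working throughout in the retractive category $R(M)$ and relative to $\partial_0 P$.

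\emph{First step: reduce to a fiberwise section problem.} Let $(M\times I; P\times J, W)$ be the given splitting. Restrict to $M\times\{0\}\subset W$, and regard $W$ as an object of $R(M)$ using the composite $W \to M\times I \to M$. Since $P\times J$ has homotopy spine dimension $\le p$ relative to $\partial_0 P\times J$, the pair $(W, M\times\{0\})$ is $(d-p-1)$-connected. This follows from the connectivity of $(P,\partial_1 P)$, Poincar\'e duality, and excision in the pushout $M\times I \simeq P\times J\cup W$.

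Triviality of the link $\nu$ then gives a fiberwise map $P/\!\!/\partial_0 P \to M\times\{0\}$ compatible with the given one on $\partial_0 P$. Equivalently, it gives a factorization, up to homotopy rel $\partial_0 P$, of the inclusion $P\times\{1/3\} \to W$ through the bottom face. We then use the Poincar\'e embedding machinery to slide $P\times J$ down toward $M\times \{0\}$. Concretely, the null-homotopy of $\nu$ produces a homotopy of the underlying map $P \to M\times I$ to one landing in $M\times\{0\}$ rel $\partial_0 P$. We would then ask that this "pushing" be realized by a concordance of embeddings. The required isotopy extension property for Poincar\'e embeddings in this range is the relative analogue of \cite[\S2]{GK}.

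\emph{Second step: construct the complement $C$ of the compressed embedding $f\: P\to M$.} Take $C$ to be the homotopy fiber product, or equivalently the "bottom slice", of $W$. Form the pair $(W_0,\partial)$ obtained by intersecting $W$ with $M\times[0,1/3]$. By Poincar\'e--Lefschetz duality on $M\times I$ and the triad structure, the candidate $C := W\cap (M\times\{0\})$-side piece must satisfy two conditions:
\begin{enumerate}
\item the square
\[
\xymatrix{
\partial_1 P \ar[r]\ar[d] & C \ar[d]\\
P \ar[r] & M
}
\]
is a homotopy pushout;
\item $(C;\partial_1 P,\partial_1 M)$ is a Poincar\'e triad.
\end{enumerate}
As in \cite{Klein_compression}, the plan is to build $C$ as a cell-by-cell (Postnikov/relative CW) approximation. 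We use the obstruction theory for extending fiberwise maps, in which the obstructions live in groups of the form $\{Y,Z;\rel A\}_{R(X)}$, since the problem lies in the metastable range. This is where the conditions $p\le d-3$ and $3p+4\le 2d$ enter. Under them, the Freudenthal/Blakers--Massey estimates show that the stabilization map
\[
E\: [P/\!\!/\partial_0 P, Z;\rel \partial_0 P]_{R(M)} \to \{P/\!\!/\partial_0 P, Z;\rel \partial_0 P\}_{R(M)}
\]
is surjective, and bijective one degree lower, for the relevant $Z$. Stable obstructions are then detected by duality, namely by the Poincar\'e duality (fiberwise S-duality) on the triads. They vanish because the embedding of $P\times J$ already exists, which is the stabilized, "decompressed" version of the problem.

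\emph{Main obstacle: the unstable step.} The hardest point is to show that the unstable obstruction is exactly governed by $\nu$. In the absolute case this used a Hopf invariant/EHP argument. Here the argument is relative to $\partial_0 P$, so one needs a relative (fiberwise, rel $A$) version of the EHP sequence in the metastable range. We would first try to prove a relative fiberwise Freudenthal theorem: $E$ is $(2c-\dim)$-connected, where $c$ is the fiberwise connectivity $d-p-2$ of the target and $\dim = p$. The inequality $3p+4\le 2d$ is precisely what makes the relevant Hopf invariant term vanish in the required degree. Once this is in place, the compressed embedding exists, and its decompression is concordant to the original embedding. We verify this by comparing complements via the uniqueness half of the same obstruction theory, which is one dimension lower and hence also in range.
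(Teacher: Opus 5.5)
Your proposal has a genuine gap: it never supplies the mechanism that actually produces the complement, and the mechanisms it does suggest are either unavailable or beside the point. Your connectivity estimate for $(W,M\times\{0\})$ is correct and is the right input. The ``sliding'' step, however, carries no information. The underlying map $P\to M\times I$ can always be pushed into $M\times\{0\}$ (sliding $\partial_0P$ down $\partial_0P\times[0,1/3]$), since $M\times I$ deformation retracts onto $M\times\{0\}$. This holds whether or not $\nu$ is trivial, because the content of $\nu$ concerns maps into the complement $W$, not into $M\times I$. Moreover, there is no isotopy extension theorem for Poincar\'e embeddings that would turn such a homotopy into a concordance. Likewise, building $C$ ``cell by cell'' by fiberwise obstruction theory in $R(M)$ does not produce a Poincar\'e triad. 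Nothing in that scheme yields duality for $(C;\partial_1P,\partial_1M)$ or the required homotopy pushout. Finally, the ``unstable step'' you single out as the main obstacle does not exist in this range. Under $p\le d-3$ and $3p+4\le 2d$ the link is already stable, i.e.\ $E\colon [P/\!\!/\partial_0P,W]_{R(M)}\to\{P/\!\!/\partial_0P,W\}_{R(M)}$ is a bijection, so no relative EHP or Hopf invariant analysis is needed. Hopf invariants enter the paper only later, through the Fringe Lemma, for the handle theorem.

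The missing idea is to use the null homotopy of $\nu$, together with the given embedding of $\partial_0P$, to build a map of pairs $\phi\colon (M_0\cup P_{[0,1/3]},\,M_0\cup(\partial_0P)_{[0,1/3]}\cup P_{1/3})\to(W,\partial W)$. This map is already an embedding on the second term. One then invokes a relative metastable Poincar\'e embedding theorem: Klein's Poincar\'e analogue of Browder--Casson--Sullivan--Wall, \cite[thm.~A]{Klein_haef2}. The numerical hypotheses of Theorem~A are exactly what it needs.
\begin{itemize}
\item Up to homotopy, the source is obtained from the boundary piece by attaching cells of dimension $\le p+1$, and $W$ has dimension $d+1$. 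So codimension $(d+1)-(p+1)\ge 3$ is $p\le d-3$.
\item $\phi$ is $(d-p-1)$-connected, since $M_0\cup P_{[0,1/3]}\to M_I$ is an equivalence and $W\to M_I$ is $(d-p)$-connected. The connectivity requirement $d-p-1\ge 2(p+1)-(d+1)+2$ is precisely $3p+4\le 2d$.
\end{itemize}
The embedding theorem yields a codimension zero embedding of a thickening $\bar M\simeq M_0$ into $W$, with a new boundary piece $A$. Lefschetz duality and the relative Hurewicz theorem then show that $(\partial_0P)_{[0,1/3]}\cup P_{1/3}\cup A\to\bar M$ is an equivalence. This identifies $A$ as the complement of $P$ in $M$, and $P_{1/3}$ as the compressed embedding extending the given one on $\partial_0P$.
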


\begin{rem} When $\partial_0 P = \emptyset$, Theorem \ref{bigthm:compression} yields \cite[thm.~A]{Klein_compression}.
\end{rem}

\subsection{Poincar\'e immersions and the fiberwise collapse} 
 Let $(P;\partial_0 P,\partial_1 P)$ and $(M;\partial_0 M,\partial_1 M)$  be as above, with
 $(\partial_0 P,\partial_{01} P) = (\partial_0 M,\partial_{01} M)$. 
Suppose $f\:P \to M$ is a relative embedding.

Then one has an associated (fiberwise) {\it collapse map} of pairs
\begin{equation*}
\mbox{$
 (M/\!\!/\partial_1 M,\partial_0 M/\!\!/\partial_{01} M) @>>> (M/\!\!/W,\partial M/\!\!/\partial_1 M) @< \simeq << (P/\!\!/\partial_1 P,\partial_0 P/\!\!/\partial_{01} P)\, .$}
\end{equation*}
where the restriction to $\partial_0 M/\!\!/\partial_{01} M$
is the identity. 
Consequently, the collapse map amounts to a  fiberwise homotopy class 
\[
f^! \in [M/\!\!/\partial_1 M,P/\!\!/\partial_1 P; \rel \partial_0 M/\!\!/\partial_{01} M]_{R(M)}\, .
\]

\begin{rem} If $\partial_0 P = \emptyset$, then $\partial_0 M/\!\!/\partial_{01} M = M$ is the terminal object
of $R(M)$. In this instance we obtain a simplification
\[
[M/\!\!/\partial_1 M,P/\!\!/\partial_1 P; \rel \partial_0 M/\!\!/\partial_{01} M]_{R(M)} = [M/\!\!/\partial M,P/\!\!/\partial P]_{R(M)} \, .
\]
\end{rem}

Let $j \ge 0$ be an integer. Then one has a Poincar\'e triad of dimension $d+j$:
\[
(P\times D^j; (\partial_0 P) \times D^j, S_P^j\partial_1 P)\, , 
\]
where $S_P^j\partial_1 P = ((\partial_1 P) \times D^j) \cup_{(\partial_1 P) \times S^{j-1}} (P \times S^{j-1})$
is the $j$-fold unreduced fiberwise suspension of $\partial_1 P$ over $P$. Here we have implemented the Poincar\'e space decomposition
\begin{align*}
\partial (P\times D^j) &= ((\partial P) \times D^j)\cup (P \times S^{j-1})\, ,\\
&= ((\partial_0 P) \times D^j) \cup (\partial_1 P) \times D^j) \cup (P \times S^{j-1})\, ,\\
&= ((\partial_0 P) \times D^j) \cup S_P^j\partial_1 P\, .
\end{align*}

\begin{defn}[{cf.~ \cite[\S1.1]{Klein_compression}}] A {\it (relative Poincar\'e) immersion} of $P$ in $M$ is an embedding
\[
P \times D^j \to M \times D^j
\]
(relative to $(\partial_0 P) \times D^j$) for some $j \ge 0$.

In particular, an embedding $P\to M$ is also an immersion. One says that an immersion of $P$ in $M$  {\it compresses}
 if, up to concordance, it arises from a relative embedding of $P$ in $M$.
 \end{defn}

\begin{rem} The definition of immersion given above is motivated by Smale-Hirsch theory. Let $P\to M$ be an immersion of smooth $n$-manifolds
with $P$ compact and having spine dimension $p \le n-3$. By transversality, there is a $j > 0$ such that the composition 
$P \to M \subset M \times D^j$ is regularly homotopic to an embedding.
The normal bundle of this embedding is trivial, so a tubular neigborhood defines an embedding $P \times D^j \to M \times D^j$. Conversely, by Smale-Hirsch theory,
up to isotopy any embedding  $P \times D^j \to M \times D^j$ arises from an immersion $P\to M$.
\end{rem}

\begin{defn}
The {\it space of immersions} 
\[
I(P,M)
\] 
(relative to $\partial_0 P$)
is  the colimit
\[
\colim_{j\to \infty} E(P \times D^j,M \times D^j)\, .
\]
\end{defn}

An immersion $f\: P \to M$ induces a (fiberwise)  {\it stable collapse}
\[
f^! \in \{M/\!\!/\partial_1 M,P/\!\!/\partial_1 P;\rel \partial_0 M/\!\!/\partial_{01} M\}_{R(M)}\, .
\]

\begin{bigthm}\label{bigthm:unstable-normal} Suppose  that the homotopy spine dimension of $P$ is $\le p$ with $p \le d-3$ and $3p+4\le 2d$. Then
 up to concordance an immersion $f\: P \to M$ compresses to an embedding
if and only if $f^!$ lies in the image of the stabilization map 
\begin{equation*}
\mbox{\small $E\: [M/\!\!/\partial_1 M,P/\!\!/\partial_1 P;\rel \partial_0 M/\!\!/\partial_{01} M]_{R(M)} \to \{M/\!\!/\partial_1 M,P/\!\!/\partial_1 P;\rel \partial_0 M/\!\!/\partial_{01} M\}_{R(M)}.$}
\end{equation*}
Furthermore, the embedding can be chosen so that its stable collapse coincides with $f^!$.
\end{bigthm}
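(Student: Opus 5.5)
The plan is to reduce Theorem \ref{bigthm:unstable-normal} to the Compression Theorem (Theorem \ref{bigthm:compression}) by an induction on $j$, one suspension coordinate at a time. Necessity and the final clause are formal: if the immersion is concordant to an embedding $g\: P\to M$, then the unstable collapse $g^!$ stabilizes to $f^!$, because stable collapse is a concordance invariant and is compatible with the maps $E(P\times D^j, M\times D^j)\to E(P\times D^{j+1},M\times D^{j+1})$. For sufficiency, I represent $f$ by an embedding $P\times D^j \to M\times D^j$ relative to $(\partial_0 P)\times D^j$. I then try to compress one coordinate at a time: write $D^j \cong D^{j-1}\times I$ and view the embedding as $(P\times D^{j-1})\times J \to (M\times D^{j-1})\times I$. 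The triad $(P\times D^{j-1}; \partial_0 P\times D^{j-1}, S^{j-1}_P\partial_1 P)$ has dimension $d+j-1$ and homotopy spine dimension still $\le p$ relative to $\partial_0 P\times D^{j-1}$. Moreover, the inequalities $p\le (d+j-1)-3$ and $3p+4\le 2(d+j-1)$ only become easier as $j$ grows. Hence Theorem \ref{bigthm:compression} applies at each stage, once we know the link vanishes.

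The heart of the argument is therefore to identify the link $\nu$ of the stage-$j$ embedding with an obstruction to desuspending the collapse. Concretely, I will show that
\[
\nu_j \in [(P\times D^{j-1})/\!\!/\partial_0,\, W]_{R(M\times D^{j-1})}
\]
corresponds, under fiberwise duality, to the obstruction for the stage-$j$ collapse $f_j^!$ to lie in the image of the single suspension
\[
E\colon [\,\cdot\,,\,\cdot\,;\rel]_{R(M)}^{(j-1)} \to [\,\cdot\,,\,\cdot\,;\rel]_{R(M)}^{(j)}.
\]
The mechanism is as follows. The complement $W$ sits in a homotopy pushout with $P\times D^{j-1}\times J$. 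The fiberwise collapse onto $P/\!\!/\partial_1P$ identifies the homotopy fiber of $W\to M\times D^{j-1}\times I$, in the metastable range, with a fiberwise Spanier--Whitehead dual of $P$ relative to $\partial_1 P$. Under this identification, $\nu_j$ becomes the composite of the collapse with the Hopf-type map measuring failure to desuspend; this is where the relative (equivariant) Hopf invariant of the abstract enters. The Freudenthal range supplied by $3p+4\le 2d$ makes the stabilization maps $E$ bijective above the first step (from roughly $j\ge 2$ onward) and surjective at the first step. Consequently, the hypothesis that $f^!$ lies in the image of the full stabilization reduces to the statement that, after modifying by concordance, $f^!_j$ desuspends at every stage. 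Hence $\nu_j=0$, and Theorem \ref{bigthm:compression} compresses stage $j$ to stage $j-1$.

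Iterating down to $j=0$ yields an embedding $g\: P\to M$ concordant (as an immersion) to $f$. Its stable collapse equals $f^!$, since each compression step preserves the collapse class up to one suspension. This gives the final clause.

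I expect the main obstacle to be the identification of the link with the desuspension obstruction of the collapse, specifically in the relative setting. The collapse must be kept fixed on $\partial_0 M/\!\!/\partial_{01}M$ throughout, so every fiberwise duality and Freudenthal argument has to be carried out rel this subobject. My first attempt would be to pass to the quotient by $\partial_0 M/\!\!/\partial_{01}M$ in $R(M)$, which turns the relative homotopy sets into absolute ones for a modified retractive pair. I would then cite the absolute version of the identification from \cite{Klein_compression}, checking the connectivity estimates via condition (ii) of Definition \ref{defn:spine}.
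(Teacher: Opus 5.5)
\textbf{There is a genuine gap in the sufficiency argument.} Your necessity direction and the bookkeeping for $P\times D^{j-1}$ (spine dimension, inequalities improving with $j$) are fine.

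\textbf{The range claim is wrong.} At level $j$ the collapse is a fiberwise class from an object of relative dimension $d+j$ to a $(d-p-1+j)$-connected object. So the suspension from level $j$ to level $j+1$ is surjective only for $j\ge 2p+1-d$, and bijective only for $j\ge 2p+2-d$. The hypothesis $3p+4\le 2d$ only places you in the metastable range, and $2p+2-d$ can be about $p/2$ (e.g.\ $p=10$, $d=17$). As stated, your claim would make the level-$0$ stabilization surjective. The paper only gets that under $d\ge 2p+1$ (Theorem \ref{bigthm:Whitney}), and if it held here the theorem would have no content.

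\textbf{Consequently the induction does not close.} At a stage where $E$ is not injective, Theorem \ref{bigthm:compression} gives some compressed embedding whose collapse $g$ satisfies only $\Sigma g=f^!_j$. Nothing forces $g=\Sigma^{j-1}\alpha$, so $g$ need not desuspend further, and by your own identification the next link need not vanish. The phrase ``after modifying by concordance'' hides a realization statement: that a compression can be chosen whose collapse is any prescribed desuspension. Theorem \ref{bigthm:compression} gives no such control. Finally, the identification of the link with a one-step desuspension obstruction is only asserted. It is a fiberwise Hopf-invariant statement, which the paper deliberately avoids.

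\textbf{The missing idea is to use the unstable class $\alpha$ with $E(\alpha)=f^!$ as construction data, not as an obstruction to be matched.} The paper builds an embedding $P\times J\to M\times I$ directly:
\begin{itemize}
\item The complement is $W=P/\!\!/\partial_1P=P\cup_{\partial_1P}M$.
\item On $\partial_1(P\times J)=S_P\partial_1P$, the structure map collapses the top copy of $P$ to $M$.
\item On $\partial_1(M\times I)=M/\!\!/\partial_1M$, it is a representative of $\alpha$. The two pieces agree on the overlap because $\alpha$ is fixed on $\partial_0M/\!\!/\partial_{01}M$.
\end{itemize}
After checking the homotopy pushout and Poincar\'e duality for $(W,\partial W)$, the link factors through $(M,M)$ and is trivial by construction. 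A single application of Theorem \ref{bigthm:compression} to $P$ itself then finishes the proof, with no induction on $j$ and no Hopf invariant.
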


\subsection{Applications} We provide two applications of the above results. 
The  first of these is a relative version of  \cite[cor.~C]{Klein_compression}. It more-or-less says that if the codimension exceeds the homotopy
spine dimension, then every relative immersion compresses.

\begin{bigthm}[{Compare \cite[cor.~C]{Klein_compression}}] \label{bigthm:Whitney} Assume that the homotopy spine dimension of $P$ is $\le p$ 
with $d \ge \max(p+3,2p+1,\frac{3}{2}p + 2)$. Then up to concordance, a relative immersion of $f\: P \to M$ compresses to  an  embedding. 
\end{bigthm}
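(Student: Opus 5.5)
We deduce Theorem \ref{bigthm:Whitney} from Theorem \ref{bigthm:unstable-normal}. The hypothesis $d \ge \max(p+3, \tfrac32 p+2)$ gives $p\le d-3$ and $3p+4\le 2d$, so Theorem \ref{bigthm:unstable-normal} applies. It therefore suffices to show that the stabilization map
\[
E\: [M/\!\!/\partial_1 M,P/\!\!/\partial_1 P;\rel \partial_0 M/\!\!/\partial_{01} M]_{R(M)} \to \{M/\!\!/\partial_1 M,P/\!\!/\partial_1 P;\rel \partial_0 M/\!\!/\partial_{01} M\}_{R(M)}
\]
is surjective under the remaining hypothesis $d\ge 2p+1$. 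Once that is known, the stable collapse $f^!$ of any relative immersion lies in the image of $E$, and so $f$ compresses up to concordance.

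Surjectivity will come from a fiberwise Freudenthal argument, applied relative to the subobject $\partial_0 M/\!\!/\partial_{01} M$. We need a dimension bound on the source pair and a connectivity bound on the target.

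\textbf{Source dimension.} By Poincar\'e duality for the triad $(M;\partial_0 M,\partial_1 M)$, the pair $(M,\partial_1 M)$ is dual to $(M,\partial_0 M)$. The source $M/\!\!/\partial_1 M$ relative to $\partial_0 M/\!\!/\partial_{01}M$ is, cellwise over $M$, the pair $(M, \partial_1 M\cup \partial_0 M)=(M,\partial M)$ after collapsing. This relative object has fiberwise (relative) dimension $\le d$, since $M$ is homotopically $d$-dimensional rel $\partial M$. To keep control we would use the fiberwise cell structure: relative cells of $(M,\partial M)$ in dimensions $\le d$, viewed as objects of $R(M)$.

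\textbf{Target connectivity.} The object $P/\!\!/\partial_1 P\in R(M)$ (via $f$) has fibers governed by the pair $(P,\partial_1 P)$. By Definition \ref{defn:spine}(ii), this pair is $(d-p-1)$-connected, so $P/\!\!/\partial_1 P$ is fiberwise $(d-p-1)$-connected.

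\textbf{Stable range.} The fiberwise Freudenthal theorem (relative version, proved by induction over relative cells of the source, with obstruction groups given by cohomology with coefficients in the fiberwise homotopy groups) says $E$ is surjective when
\[
\dim \le 2\cdot\mathrm{conn}+1, \qquad\text{that is,}\qquad d\le 2(d-p-1)+1 = 2d-2p-1,
\]
which is equivalent to $2p+1\le d$. This is exactly the remaining hypothesis.

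One subtlety is that the rel condition requires lifting stable maps that already agree on $\partial_0 M/\!\!/\partial_{01}M$ with a fixed unstable map. The cellular induction over $(M,\partial M)$ handles this automatically, because only relative cells are involved.

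\textbf{Main obstacle.} The hard step is the fiberwise Freudenthal input with the rel boundary condition. I would first try to reduce it to the fiberwise suspension theorem in $R(M)$: the map $[Y,Z]_{R(M)}\to[\Sigma_M Y,\Sigma_M Z]_{R(M)}$ is surjective when $\dim Y\le 2\,\mathrm{conn}(Z)+1$. I would deduce this from the fiberwise Blakers–Massey theorem applied to $Z\to\Omega_M\Sigma_M Z$, which is $(2\,\mathrm{conn}(Z)+1)$-connected. Then I would pass to the relative setting via the cofiber sequence $\partial_0 M/\!\!/\partial_{01}M\to M/\!\!/\partial_1 M\to M/\!\!/\partial M$ and the fibration of mapping spaces restricting to the subobject. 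Iterating over suspensions then gives surjectivity onto the colimit $\{\ ,\ \}_{R(M)}$.

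A secondary point to check is whether the source dimension should be taken as $d$ or as the sharper dual bound. If it were $d-1$, the theorem would hold under a weaker inequality, so I expect the bound $d$ and the hypothesis $d\ge 2p+1$ to match exactly.
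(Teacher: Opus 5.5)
Your proposal is correct and follows the same route as the paper. The paper likewise uses $p\le d-3$ and $3p+4\le 2d$ to invoke Theorem \ref{bigthm:unstable-normal}, then gets surjectivity of $E$ from the first (fiberwise Freudenthal) part of Lemma \ref{lem:fringe}, with target $P/\!\!/\partial_1 P$ being $(d-p-1)$-connected and source of relative dimension $\le d$, so that $d\le 2(d-p-1)+1 \iff 2p+1\le d$; your plan to derive that step from fiberwise Blakers--Massey just supplies what the paper cites from \cite{Klein_haef}.
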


 \begin{rem} When $d \ge 7$, the inequality $d \ge \max (p+3,2p+1,\frac{3}{2}p + 2)$ simplifies to  $d \ge 2p+1$.
 \end{rem}

The next result concerns embeddings of handles in the middle dimension of an even dimensional Poincar\'e pair. The result is
crucial to the construction of handlebody structures on Poincar\'e spaces, the latter which will appear elsewhere.

Suppose that $(M,\partial M)$ is a Poincar\'e pair of dimension $d= 2k$, where $M$ is connected and $\partial M$ is based and non-empty.
Consider the $k$-handle
\[
(H,\partial_0 H,\partial_1 H) := (D^k\times D^{k},S^{k-1} \times D^{k},D^k \times S^{k-1})\, .
\]
Let 
\[
[\alpha]\in \pi_k(M,\partial M)
\]
be a homotopy class.

\begin{bigthm}\label{bigthm:handle} Assume $k \ge 4$. Then the homotopy class $[\alpha]$ is represented by an embedding
$(H,\partial_0 H) \to (M,\partial M)$.
\end{bigthm}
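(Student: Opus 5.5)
We will realize $[\alpha]$ in two stages: first as a relative immersion of the handle, and then compress that immersion using Theorem~\ref{bigthm:unstable-normal}. Here $P = H$ has $d=2k$, $\partial_0 H = S^{k-1}\times D^k$ and $\partial_1 H = D^k\times S^{k-1}$. The pair $(H,\partial_0 H)$ is homotopically $k$-dimensional, being equivalent to $(D^k,S^{k-1})$. The pair $(H,\partial_1 H)\simeq (D^k, S^{k-1})$ is $(k-1)$-connected, and $k-1 = d-p-1$ when $p=k$. So the homotopy spine dimension is $\le p=k$.

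The hypotheses $p\le d-3$ and $3p+4\le 2d$ read $k\le 2k-3$ and $3k+4\le 4k$. Both hold exactly when $k\ge 4$, which explains the assumption. However, $d\ge 2p+1$ fails, so Theorem~\ref{bigthm:Whitney} is not available. We must instead verify directly that the stable collapse destabilizes.

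\textbf{Step 1: an immersion representing $[\alpha]$.} Represent $\alpha$ by a map $(D^k,S^{k-1})\to (M,\partial M)$ and thicken it. The restriction $S^{k-1}\to\partial M$ is a map into a $(2k-1)$-dimensional Poincar\'e space. After crossing with $D^j$ for $j$ large, we are in the metastable/stable range. There, by the absolute results of \cite{Klein_compression} (in particular \cite[cor.~C]{Klein_compression}), the map $S^{k-1}\times D^{k+j}\to \partial M\times D^j$ is realized by a Poincar\'e embedding with trivial normal data. This embedding plays the role of $\partial_0 P$ after replacing $(M,\partial M)$ by $(M\times D^j, \partial(M\times D^j))$.

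Next we extend to an embedding of $H\times D^j$ in $M\times D^j$ relative to $\partial_0 H\times D^j$. For this we apply Theorem~\ref{bigthm:Whitney} in dimension $d+j$ with spine dimension $k$, which is valid for $j$ large. This yields an immersion $f$ whose underlying map represents $[\alpha]$.

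\textbf{Step 2: destabilizing the collapse (main obstacle).} By Theorem~\ref{bigthm:unstable-normal} it suffices to show that
\[
f^!\in \{M/\!\!/\partial_1 M, H/\!\!/\partial_1 H;\rel \partial_0 M/\!\!/\partial_{01}M\}_{R(M)}
\]
lies in the image of $E$. Since $H/\!\!/\partial_1 H$ is fiberwise equivalent over $M$ to a fiberwise $k$-sphere (the Thom space of a trivial $k$-disk bundle pulled back along $D^k$), this reduces to an obstruction-theory problem. The relative cells of $M/\!\!/\partial_1 M$ rel $\partial_0 M/\!\!/\partial_{01}M$ are modeled by $(M,\partial_1 M)$, which by duality has dimension $\le 2k$ up to homotopy. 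We then compare unstable and stable fiberwise maps into a fiberwise $k$-sphere.

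The Freudenthal range gives surjectivity of $E$ for sources of dimension $\le 2k-1$. In the top dimension $2k$, the only obstruction is a Hopf-invariant-type class in a group of the form $H^{2k}(M,\partial_1 M;\mathbb{Z}[\pi]\text{-twisted } \mathbb{Z}/2 \text{ or } \mathbb{Z})$, given by the quadratic construction. This is where the equivariant relative Hopf invariant advertised in the abstract enters. We expect to kill this obstruction by exploiting the freedom in Step 1. Changing the immersion by a regular homotopy, that is, by an element of the stable normal data over $D^k$ rel $S^{k-1}$ coming from $\pi_k$ of the stable orthogonal or spherical group, alters the Hopf invariant by an arbitrary amount in the relevant coefficient group (Wall's self-intersection/$\mu$-form indeterminacy). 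Since $\partial M$ is nonempty and $\alpha$ is a relative class, the top cell can also be pushed into the boundary, which should make the obstruction vanish outright.

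Once $f^!$ destabilizes, Theorem~\ref{bigthm:unstable-normal} compresses $f$ to a relative embedding $(H,\partial_0 H)\to (M,\partial M)$ with the same underlying homotopy class $[\alpha]$. The hard step is the precise identification and vanishing of the top-dimensional obstruction in Step 2.
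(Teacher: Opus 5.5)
Your numerology is right: $p=k$ and $d=2k$, so Theorem~\ref{bigthm:unstable-normal} applies exactly when $k\ge 4$, and Theorem~\ref{bigthm:Whitney} does not. You are also right that the only obstruction is a fringe-dimension Hopf invariant. But Step~2 is the whole content of the theorem, and the two mechanisms you propose there do not work.

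By Lemma~\ref{lem:fringe} the obstruction is $H_{\partial\tilde M_+}(f^!)\in\{\tilde M/\partial\tilde M,D_2(S^k\wedge(\pi_+))\}_{T_\ast(\pi)}$. The relevant cells are those of $M$ relative to \emph{all} of $\partial M=\partial_0M\cup\partial_1M$, not those of $(M,\partial_1M)$. By duality this group is the top cohomology of $(M,\partial M)$. That is a Wall-type quotient of $\mathbb{Z}[\pi]$ whose summand at $1\in\pi$ is $\mathbb{Z}$ or $\mathbb{Z}/2$, so it is never zero.

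So the obstruction does not ``vanish outright''. Non-emptiness of $\partial M$ only gives $\{\tilde M_+,D_2(S^k\wedge(\pi_+))\}=0$, and $H^{2k}(M,\partial_1M)\cong H_0(M,\partial_0M)=0$; neither is the obstruction group.

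Nor can you kill it by modifying $f$ with $\phi_0$ fixed. Concordance (regular homotopy) does not change $f^!$ at all. Changing the normal data over $(D^k,S^{k-1})$ by $\pi_k$ of the stable spherical group is a local change. In fact, for fixed $\phi_0$ the obstruction can be nonzero for every immersion in a given homotopy class rel $\phi_0$. Already smoothly, take $M=S^k\times D^k$ with $k$ even. The fiber disk tubed to $m$ parallel core spheres represents the same element of $\pi_k(M,\partial M)$ as the fiber disk. Yet its intersection number with its push-off along the fixed boundary framing is $\pm 2m$. This number is a homotopy invariant rel $\phi_0$ and must vanish for an embedded handle. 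This is the phenomenon in Remark~(1) after Theorem~\ref{bigthm:handle}.

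The missing idea is to move the boundary embedding. The paper glues onto $f$, along a collar $\partial M\times I$, an immersion $\phi\colon\partial_0H\times I\to\partial M\times I$. It restricts to $\phi_0$ at $0$ and to another embedding $\phi_1$ at $1$. Then $f\cup\phi$ still represents $[\alpha]$, and Lemma~\ref{lem:gluing} gives
\[
H_{\partial\tilde M_+}\bigl((f\cup\phi)^!\bigr)=H_{\partial\tilde M_+}(f^!)+H_{(\partial\tilde M\times\partial I)_+}(\phi^!)\circ\delta .
\]
Two surjectivity facts then let one cancel $H_{\partial\tilde M_+}(f^!)$:

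\textbf{(i)} Comparing embeddings and immersions of $\partial_0H$ in $\partial M$ with unstable and stable equivariant maps into $S^k\wedge(\pi_+)$ gives a $0$-Cartesian square for $k\ge4$. Combined with the EHP square, this shows that the relative Hopf invariants of such $\phi$ exhaust $\{\Sigma\partial\tilde M_+,D_2(S^k\wedge(\pi_+))\}_{T_\ast(\pi)}$.

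\textbf{(ii)} Precomposition with $\delta$ maps this group onto the obstruction group, because $\{\tilde M_+,D_2(S^k\wedge(\pi_+))\}=0$. This surjectivity, not any vanishing, is where $\partial M\neq\emptyset$ is used.

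Geometrically this is pushing double points out through the boundary, which your last sentence gestures at. But you supply neither the boundary modification nor the computation of its effect.

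There is also a smaller problem in Step~1. Theorem~\ref{bigthm:Whitney} only compresses an immersion you already have, so it cannot produce one. The initial relative immersion should come from Theorem~\ref{thm:Smale-Hirsch}, with the boundary normal data chosen to extend over $D^k$. Its boundary part must be an honest embedding $\phi_0\colon\partial_0H\to\partial M$, not a stabilized one. Such an embedding is available from the absolute compression corollary, since $\partial_0H$ has spine dimension $k-1$ in dimension $2k-1$.
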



\begin{rems} \phantom{dfrrvs}
\smallskip

\noindent (1). In Theorem \ref{bigthm:handle}, one does not {\it a priori}
fix an embedding $\partial_0 H \to \partial M$ in advance. In other words,
it is not clear whether a given embedding $\partial_0 H \to \partial M$ can be extended
to a relative embedding $H \to M$.
\medskip

\noindent (2). A statement similar to Theorem \ref{bigthm:handle} is found in \cite[4.19(b)]{HV}. The latter
result is proved with manifold techiques and it involves entering a rabbit hole of multiple nested double inductions
(I count at least seven). By contrast, the proof of Theorem \ref{bigthm:handle}  is homotopy theoretic.
\end{rems}

\subsection{The relative Hopf invariant}
The proof of Theorem \ref{bigthm:handle} involves the  {\it relative Hopf invariant}, the latter which
is an operation on stable homotopy classes of equivariant  maps. 
We present a  sketch of it here in the unequivariant setting.

For based spaces $X$ and $Y$, the classical (stable) Hopf invariant is a natural transformation
\[
H\: \{X,Y\} \to \{X,D_2(Y)\}\, ,
\]
where $\{X,Y\}$ is the abelian group of stable homotopy classes of maps from $X$ to $Y$ and $D_2(Y)$ is the quadratic construction 
on $Y$, i.e., the homotopy orbits of $\Bbb Z_2$ acting on the smash product $Y \smsh Y$.  In fact, $H$ is representable in the sense
that there is a map $h\: Q(Y) \to Q(D_2(Y))$ which induces $H$ by applying homotopy classes $[X,{-}]$.
Here, $Q =\Omega^\infty\Sigma^\infty$ denotes the stable homotopy functor.

 For our purposes, it suffices to define a {\it stable map} $X\to Y$ as map of based spaces $X\to Q(Y)$; stable maps can be composed (and form an $\infty$-category).
Suppose that $(X,A)$ is a CW pair
and $f\: X\to Y$ is a stable map which is unstable when restricted to $A$. The word ``unstable'' means
that $f_{|A}\: A\to Y$ is a genuine map of based spaces (the composition $A\to Y \to Q(Y)$ defines the
associated stable map).
 Then the stable map
 $h\circ f_{|A} \: A \to D_2(Y)$ has a preferred null homotopy. This null homotopy taken together with the stable map $h\circ f\: X\to D_2(Y)$
defines a stable map $X\cup CA \to D_2(Y)$, where the domain is the mapping cone of the inclusion $A\to X$. 
The induced map on homotopy classes
defines a natural transformation
\[
H_A\: \{X,Y;A\} \to \{X/A,D_2(Y)\}\, ,
\]
where the domain is the set of homotopy classes of stable maps $X\to Y$ which are unstable on $A$, i.e., 
$[(X,A),(Q(Y),Y)]$.
This defines the relative Hopf invariant in the unequivariant case. 

Probably the most significant result about $H_A$ is that it provides a criterion for when a stable map
$f$ relatively desuspends to an unstable map in the metastable range:
If $Y$ is $r$-connnected and 
the relative dimension of $(X,A)$ is at most $3r+1$, then $f$ desuspends relative to a given desuspension of $f_{|A}$
if and only if $H_A(f) = 0$.

\begin{out} Section \ref{sec:prelim} introduces the model categories of spaces that appear in the paper.
In section \ref{sec:Hopf-invariant} we introduce the (equivariant stable) Hopf invariant. After listing its properties, we derive
the Action Lemma. Although the latter is not used in the paper, it serves as motivation for the Gluing Lemma
in the section \ref{sec:rel-Hopf}.
Section \ref{sec:rel-Hopf} introduces the relative Hopf invariant, proves the Gluing Lemma and the Fringe Lemma; these are used in the proof
of Theorem \ref{bigthm:handle}. Section \ref{sec:compression}
contains the proof of Theorem \ref{bigthm:compression}. In section \ref{sec:unstable-normal} we prove
Theorems \ref{bigthm:unstable-normal} and Theorem \ref{bigthm:Whitney}. Section \ref{sec:handle} contains the proof
of Theorem  \ref{bigthm:handle}.
\end{out}

\begin{ack} The author is grateful to Shmuel Weinberger for his steady encouragement 
in the current endeavor.
\end{ack}

\section{Preliminaries} \label{sec:prelim}
Let $\cal C$ be a Quillen model category. 
Then the homotopy category $\text{ho}\cal C$ is defined and it has the same objects as $\cal C$. For objects
$X,Y\in \cal C$, we write
\[
[X,Y]_{\cal C}  = \hom_{\text{ho} \cal C}(X,Y)
\]
for the set of homotopy classes.

If $\phi\: A \to B$ is a map of $\cal C$, then
we let 
\[
\cal C_\phi 
\] be the {\it category of factorizations} of $\phi$. An object of $\cal C_\phi$
is a  triple $(X,r,s)$ in which $X\in \cal C$ is object, and $s\: A\to X$ and $r\: X\to B$ are maps. 
A morphism $(X,r,s) \to (X',r',s')$ of $\cal C_\phi$ is a map $f\: X\to X'$ 
of $\cal C$ such that $f\circ s = s'$ and $r'\circ f = r$.  When $r$ and $s$ are understood, we write
$X$ in place of $(X,r,s)$. There is a forgetful functor $\cal C_\phi \to \cal C$.

Declare a map $f\: X\to Y$ of $\cal C_\phi$ to be  a weak equivalence (fibration, cofibration) if and only if
it is a weak equivalence (resp.~fibration, cofibration) when considered in $\cal C$.
In particular, an object $(X,r,s)$ is cofibrant if and only if
$s\: A\to X$ is a cofibration of $\cal C$ and $(X,r,s)$ is fibrant if and only if
$r\: X\to B$ is a fibration.

\begin{lem} \label{lem:factorizations} With respect to the above, $\cal C_\phi$ has the structure of a model category.
\end{lem}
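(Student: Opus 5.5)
The plan is to observe that $\cal C_\phi$ is the iterated slice category $(A\downarrow \cal C)\downarrow \phi$, namely the over category of the object $(B,\phi)$ in the under category $A/\cal C$. I will use the standard fact (Hirschhorn, Hovey) that if $\cal C$ is a model category, then for any object $X$ both the under category $X/\cal C$ and the over category $\cal C/X$ are model categories. In each of these, a map is a weak equivalence, fibration or cofibration precisely when its image under the forgetful functor to $\cal C$ is one. Applying this twice gives a model structure on $\cal C_\phi$. Its classes of maps are detected by the composite forgetful functor $\cal C_\phi \to A/\cal C \to \cal C$, so they agree with the classes declared in the text.

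To keep things self-contained I will also verify the axioms directly.

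\emph{Limits and colimits.} Since $\cal C$ has all small limits and colimits, so does $\cal C_\phi$. A colimit of a diagram $(X_i,r_i,s_i)$ is computed as follows. For a nonempty diagram, take the colimit in $\cal C$ of the $X_i$; the map from $A$ is any $s_i$ followed by the structure map, which is well defined on connected components. The map to $B$ is induced by the $r_i$. The only subtle cases are the empty colimit and a colimit over a disconnected indexing category. Both are handled by forming instead the colimit in $\cal C$ of the diagram with $A$ adjoined as a cone point, i.e.\ the pushout of the $X_i$ amalgamated along $A$. Limits are dual, using $B$ in the same way.

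\emph{Two-out-of-three and retracts.} These hold because the forgetful functor preserves composition and retract diagrams, and the three classes are defined by pulling back along it.

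\emph{Lifting.} Consider a square in $\cal C_\phi$ whose left map is a cofibration, whose right map is a fibration, and one of which is acyclic. Any lift in $\cal C$ is automatically a morphism in $\cal C_\phi$. Compatibility with the maps from $A$ follows from the commutativity of the upper triangle, since the structure maps from $A$ factor through the source of the left map. Compatibility with the maps to $B$ follows in the same way from the lower triangle.

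\emph{Factorization.} Given $f\colon X\to Y$ in $\cal C_\phi$, factor it in $\cal C$ as $X\to Z\to Y$. Give $Z$ the map from $A$ obtained by composing $A\to X\to Z$, and the map to $B$ obtained by composing $Z\to Y\to B$. Both factors are then morphisms of $\cal C_\phi$, and they lie in the required classes.

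I expect no real obstacle here. The only points needing care are the colimit over the empty or disconnected diagram (initial object $A\to A\to B$, terminal object $A\to B\to B$) and checking that lifts respect the structure maps. Finally, I will record the characterization stated in the text. An object $(X,r,s)$ is cofibrant if and only if the map from the initial object $(A,\phi,\mathrm{id})$, which is $s$, is a cofibration. Likewise, it is fibrant if and only if the map to the terminal object $(B,\mathrm{id},\phi)$, which is $r$, is a fibration.
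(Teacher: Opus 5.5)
Your proposal is correct and follows essentially the same route as the paper: the paper identifies $\cal C_\phi$ with $(A,\phi)\backslash(\cal C/B)$ and applies Quillen's result that slice categories of a model category are model categories, which is your iterated-slice argument with the two slicings done in the other, isomorphic, order. Your direct check of the axioms is a sound supplement; the only fix is wording: your first colimit recipe should be stated for nonempty \emph{connected} diagrams, which you effectively correct in the next sentence.
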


\begin{proof} The category $\cal C_\phi$ coincides with the category
\[
(A,\phi)\backslash(\cal C/B)\, .
\]
The result then follows by application of \cite[prop.~2.2.6]{Quillen-homotopical}.
\end{proof}

%

%


\subsection{Model categories of spaces} Recall that $T$ is the model category of
compactly generated weak Hausdorff spaces.

A non-empty space $X$ 
is {\it $0$-connected} if it is path connected.
It is $r$-connected with  $r > 0$, if it is $0$-connected and the set of homotopy classes
$[S^k,X]$ has one element for $k\le r$.
A map of non-empty (possibly unbased) spaces is
{\it $r$-connected} if its homotopy fiber, taken with respect to
all choices of basepoint, is $(r{-}1)$-connected. 

Let $\pi$ be a group. Let $T(\pi)$ to be the category of spaces with left $\pi$-action,
i.e., an object of $T(\pi)$ is an object of $T$ equipped with an action $\pi \times T \to T$.
A morphism of $T(\pi)$ is an equivariant map. There is a forgetful functor
$T(\pi) \to T$. Declare a morphism $f\: X\to Y$ of $T(\pi)$ to be a weak equivalence (resp.~fibration)
if and only if it is a weak equivalence (resp.~fibration) of $T$. Declare $f$ to be a cofibration if and only if
$f$ possesses the left lifting property with respect to the acyclic fibrations.

\begin{lem} \label{lem:pi-space} With respect to this structure, $T(\pi)$ is a model category.
\end{lem}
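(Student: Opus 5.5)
The plan is to transfer the Quillen model structure on $T$ to $T(\pi)$ along the free--forgetful adjunction, using the standard transfer (lifting) theorem for cofibrantly generated model categories; cf.\ Kan's lifting theorem as presented by Hirschhorn or Hovey. The forgetful functor $U\: T(\pi) \to T$ has a left adjoint $F\: T \to T(\pi)$, given by $X \mapsto \pi \times X$ with $\pi$ discrete and acting by left translation on the first factor. The model category $T$ is cofibrantly generated: the generating cofibrations are $I = \{S^{n-1} \to D^n\}$ and the generating acyclic cofibrations are $J = \{D^n \times 0 \to D^n \times [0,1]\}$. I will take $FI = \{\pi \times S^{n-1} \to \pi\times D^n\}$ and $FJ$ as the candidate generating sets for $T(\pi)$.

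The transfer theorem requires three verifications.

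\textbf{Limits and colimits.} First, $T(\pi)$ is complete and cocomplete. Limits and colimits are formed in $T$ and given the induced action. This works because $\pi\times({-})$ preserves colimits in compactly generated spaces, as $\pi$ is discrete.

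\textbf{Smallness.} Second, the domains of $FI$ and $FJ$ are small relative to relative $FI$- and $FJ$-cell complexes. By adjunction, a map $F(K) \to \colim_\beta X_\beta$ is the same as a map $K \to U\colim_\beta X_\beta$. Moreover $U$ carries relative $FJ$-cell complexes to relative cell complexes in $T$, since $U(\pi\times D^n) = \coprod_\pi D^n$ and pushouts and transfinite compositions are computed underlying. Compactness of $S^{n-1}$ and $D^n$ with respect to closed $T_1$ inclusions then gives smallness.

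\textbf{Acyclicity condition.} Third, and this is the main point, $U$ must take relative $FJ$-cell complexes to weak equivalences. I expect this step to be the only nonformal one, but it is easy here. Each map in $FJ$ is, underlying, a coproduct of copies of $D^n \to D^n\times[0,1]$, hence the inclusion of a strong deformation retract. Pushouts in $T(\pi)$ are computed in $T$, so a pushout along such a map is again the inclusion of a deformation retract, and in particular a weak equivalence. A transfinite composition of such maps, all closed inclusions, is a weak equivalence by compactness of spheres.

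Once these three conditions hold, the transfer theorem gives a model structure on $T(\pi)$ in which weak equivalences and fibrations are created by $U$. These are exactly the weak equivalences and fibrations declared in the statement, because $f$ has the right lifting property against $FJ$ (resp.\ $FI$) if and only if $Uf$ has it against $J$ (resp.\ $I$). The cofibrations are then forced to be the maps with the left lifting property against acyclic fibrations, which agrees with the definition in the statement. This completes the plan.
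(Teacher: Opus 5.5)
Your proposal is correct and is essentially the paper's argument. The paper identifies $T(\pi)$ with the diagram category $T^\pi$ (viewing $\pi$ as a one-object category) and cites Hirschhorn's Theorem 11.6.1 on projective model structures, whose proof is exactly the transfer along the free functor $X \mapsto \pi \times X$ that you carry out by hand, verifying smallness and acyclicity directly in this special case.
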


\begin{proof} Let $\pi$ be considered as a category with one object $\ast$ and
$\hom(\ast,\ast) = \pi$. Then $T(\pi)$ is the category of diagrams $T^\pi$, i.e., the category
of functors $\pi \to T$.
The result then follows by application of \cite[thm.~11.6.1]{Hirschhorn}.
\end{proof}

\begin{rem} Let $X\in T(\pi)$ be an object and suppose that
$f\: S^{j-1} \times \pi \to  X$ is an equivariant map. Then the space
$X\cup_f (D^j \times \pi)$ is the result of attaching a free $j$-cell to $X$.
If $Y$ is obtained from $X$ by attaching arbitrarily many free cells, then 
the inclusion $X\to Y$ is a cofibration. 
Conversely, every cofibration of $T(\pi)$ arises as retract of such an inclusion $X\to Y$.
\end{rem}

Let $\phi\: A\to B$ be an equivariant map of left $\pi$-spaces. Then the category of factorizations
\[
T(\pi)_{\phi}
\]
is a model category by Lemmas \ref{lem:pi-space} and \ref{lem:factorizations}.

Here are some specific cases of interest:
\begin{enumerate}[(i).]
\item The group $\pi$ is trivial and $\phi\: \emptyset \to X$. In this
case $T(\pi)_\phi$ is $T_{/X}$,  the model category of spaces over $X$.
\item The group $\pi$ is trivial and $\phi\: X\to X$ is the identity map. In this case we denote the category $T(\pi)_\phi$ by
\[
R(X)\, .
\]
This is the model category of retractive spaces over $X$.
\item The group $\pi$ is arbitrary and $\phi\: X\to X$ is the identity map. Then $T(\pi)_\phi$
is denoted by
\[
R(X,\pi)\, .
\]
This is the model category of equivariant retractive spaces over $X$.

\item The group $\pi$ is arbitrary and $\phi\: \ast \to \ast$ is the identity map of a point. In this case, we denote the category by 
\[
T_\ast(\pi)\, .
\]
This is the model category of based $\pi$-spaces.
\item The group $\pi$ is arbitrary and $\phi\: A \to \ast$ is the constant map to a point. In this case, we denote the category by 
 \[
 T_A(\pi)\,  .
 \]
This is the model category of  $\pi$-spaces under $A$.
\end{enumerate}

\begin{defn}[Connectivity and Dimension] \label{defn:conn-dim} Let $\phi\: A \to B$ be a morphism of $T(\pi)$ (or ~$T_\ast(\pi)$).
\begin{enumerate}[(i).]
\item A morphism $X\to Y$ of $T(\pi)_\phi$ is {\it $r$-connected} if and only if it is $r$-connected when considered in $T$.
\item An object $X \in T(\pi)_\phi$ is {\it $r$-connected} if the unique map to the terminal object, i.e.,  the map $X\to B$, is $(r+1)$-connected. 
\item We write $\dim_A X \le n$ if $X$ is the retract of an object built from $A$ by attaching free cells of dimension at most $n$.
\end{enumerate}
\end{defn}

\subsection{Poincar\'e duality spaces}
A space $P$ is a {\it Poincar\'e duality space} 
of  dimension $d$ if there exists a pair
\[
(L,[P])
\]
in which $L$ is a  bundle  of coefficients that is locally isomorphic
to $\Bbb Z$, and  $[P] \in H_d(P;L )$ is a homology class
such that the associated cap product homomorphism
\[
\cap [P]\:H^*(P;\cal B) \to H_{d{-}*}(P;\cal B \otimes L )
\]
is an isomorphism in all degrees for every local coefficient bundle $\cal B$ \cite{Wall_PD}.
If the pair $(L,[P])$ exists, then it
is defined up to unique isomorphism; $L$ is called
the {\it orientation sheaf} and $[P]$ the {\it fundamental class}.

Similarly, given a Poincar\'e space $\partial P$
of dimension $d-1$, one has the notion of {\it Poincar\'e pair} $(P,\partial P)$ of dimension $d$,
where now $[P] \in H_d(P,\partial P;L )$ induces an isomorphism
\[
\cap [P]\:H^*(P;\cal B) \to H_{d{-}*}(P;\partial P; \cal B\otimes L)\, 
\]
(cf.~\cite[thm.~B]{Klein-Qin-Su}).
In this paper, we will assume that Poincar\'e spaces (pairs) are cofibrant
are finitely dominated, i.e., they are a retract of a finite CW complex
(resp.~finite CW pair) up to homotopy.

\begin{rem}
Poincar\'e duality spaces admit a stable spherical fibration called the {\it Spivak normal fibration}
which plays the role of the stable normal bundle in the Poincar\'e category. The Spivak normal fibration is
unique up to contractible choice. A homotopy theoretic construction of it can be found in \cite{Klein_dualizing}.
\end{rem}

\section{The Hopf invariant}  \label{sec:Hopf-invariant}

Let $Y\in T_\ast(\pi)$ be a cofibrant object. We set
\[
Q(Y) := \Omega^\infty \Sigma^\infty Y \in T_\ast(\pi) \, .
\]
That is, $Q(Y)$ is given by the direct limit of the based mapping spaces
\[
\lim_{n\to \infty} \map_\ast(S^n, S^n \smsh Y)\, .
\]

Let
\[
D_2(Y) = (Y \smsh Y) \smsh_{\Bbb Z_2} (E\Bbb Z_2)_+ \in T_\ast(\pi)
\]
be the {\it quadratic construction} of $Y$, where $E\Bbb Z_2$ is the 
$S^\infty$ with the free $\Bbb Z_2$-action, $(E\Bbb Z_2)_+$ is the
union with a disjoint basepoint, and $(Y \smsh Y) \smsh_{\Bbb Z_2} (E\Bbb Z_2)_+$
is the orbit space of $\Bbb Z_2$ acting diagonally on $(Y \smsh Y) \smsh (E\Bbb Z_2)_+$.

For cofibrant objects $Y,Z\in T_\ast(\pi)$, we let
\[
\{Y,Z \}_{T_\ast(\pi)} := [Y, Q(Z)]_{T_\ast(\pi)} = \colim_{j \to \infty} [\Sigma^j Y,\Sigma^j Z]_{T_\ast(\pi)}
\]
denote the abelian group of equivariant stable homotopy classes.

The (equivariant stable) Hopf invariant \cite[defn.~7.8, 7.9]{Crabb-Ranicki}, \cite[eqn.~(12)]{Klein-Naef}. is a natural
transformation
\[
H\: \{X,Y\}_{T_\ast(\pi)} \to \{X,D_2(Y)\}_{T_\ast(\pi)}
\]
that is induced by a
 $\pi$-equivariant  map
\[
h\: Q(Y) \to Q(D_2(Y)) \, 
\]
which is natural in $Y$.

The composition
\[
Y @> j >> Q(Y) @> h >> Q(D_2(Y))
\]
has a preferred natural null-homotopy, where $j$ is the inclusion.

\subsection{Properties}
For equivariant stable maps $f,g\: A\to B$, the {\it cup product}
\[
f\cup g \in \{A, B\smsh B\}_{T_\ast(\pi)}
\]
is the stable composition
\[
A @> \Delta_A >> A \smsh A @> f\smsh g >> B \smsh B\, ,
\]
where $\Delta_A$ is the reduced diagonal.
Let
\[
f\cup_2 g \in \{A,D_2(B)\}
\] 
be defined as  the stable composition
\[
A  @> f\cup g >> B \smsh B \to D_2(B)\, ,
\]
where the second map defined is the equivariant homotopy class defined by the diagram
\[
B \smsh B @<\sim << (B \smsh B) \times E\Bbb Z_2 @>>> D_2(B)\, ,
\]
in which the left map is induced by the constant map $E\Bbb Z_2\to \ast$ and the right map is the quotient map defined by 
taking coinvariants. 

Let 
\[
\tr\: D_2(B) \to B\smsh B
\] denote the (equivariant stable) {\it transfer map} associated with the double
cover $(B^{\times 2} \times E\Bbb Z_2, B^{\vee 2}\times E\Bbb Z_2)\to (B^{\times 2}  \times_{\Bbb Z_2} E\Bbb Z_2, B^{\vee 2} \times B\Bbb Z_2)$.

\begin{lem} \label{lem:Hopf-properties} The Hopf invariant satisfies:
\begin{enumerate}[(1).]
\item (Cartan Formula). $H(f+g) = H(f) + H(g) + f \cup_2 g$; 
\item (Transfer Formula). $\tr H(f)= (f\smsh f) \circ \Delta_A - \Delta_B \circ f$;
\item (Composition Formula). $H(g\circ f) = H(g)\circ f + D_2(g) \circ H(f)$.
\end{enumerate}
\end{lem}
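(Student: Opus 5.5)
The three formulas are the standard properties of the stable Hopf invariant, and the plan is to get them from an explicit model of $h$ rather than by general nonsense. I will take the model of \cite{Crabb-Ranicki} / \cite{Klein-Naef}. A point of $Q(Y)$ is represented, after stabilization, by a map $F\colon S^V\to S^V\smsh Y$. Using the diagonal of $S^V$ one forms two $\Bbb Z_2$-equivariant maps $S^{V\oplus V}\to S^{V\oplus V}\smsh Y\smsh Y$:
\[
F\smsh F \qquad\text{and}\qquad (\Delta_Y\smsh 1)\circ(\text{diagonal-induced } F).
\]
Both agree on the fixed set $S^V$, where $\Bbb Z_2$ swaps the factors. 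Their difference on the complement of the fixed set is a free $\Bbb Z_2$-map, which gives a stable map to $(Y\smsh Y)_{h\Bbb Z_2}=D_2(Y)$; this is $h(F)$. Since everything is built functorially from $F$ and the diagonal, the construction is $\pi$-equivariant. It also vanishes canonically on unstable $F=1\smsh g$, which gives the preferred null-homotopy of $h\circ j$.

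With this model in hand, I will prove the formulas in the following order.

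\textbf{Transfer formula.} This should come first and is nearly formal. Applying the transfer of the double cover to the free-part difference class just forgets the orbit passage. What remains is the nonequivariant difference of the two maps, namely $(f\smsh f)\circ\Delta_A-\Delta_B\circ f$.

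\textbf{Cartan formula.} Represent $f+g$ by the pinch sum $F\vee G$ on $S^V$. The map $(F+G)\smsh(F+G)$ decomposes over the four pieces of $S^V\smsh S^V$. The diagonal pieces contribute $H(f)$ and $H(g)$. The two off-diagonal pieces are exchanged by $\Bbb Z_2$, so together they are free, and their orbit is exactly $f\smsh g$ composed with $\Delta_A$ and pushed into $D_2(B)$ via the map defined above. This is the term $f\cup_2 g$. The main care needed is that $\Delta_A$ stably models the diagonal of $S^V\smsh A$.

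\textbf{Composition formula.} For $G\circ F$, insert the intermediate term $(G\smsh G)\circ(\Delta\text{-induced }F)$. The difference class of $G\circ F$ then splits as the sum of two difference classes. The first is the class comparing $G\smsh G$ with $\Delta\circ G$, precomposed with $F$, which is $H(g)\circ f$. The second is the class comparing $F\smsh F$ with $\Delta\circ F$, postcomposed with the equivariant map $G\smsh G$; on $\Bbb Z_2$-orbits this is $D_2(g)\circ H(f)$. Additivity of difference classes under concatenation of homotopies gives the sum.

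I expect the main obstacle to be the composition formula. The issue is making $D_2(g)$ meaningful when $g$ is only a stable map. This needs the quadratic construction to be functorial for stable maps, i.e.\ $D_2$ extended to the $\Bbb Z_2$-equivariant stable category via $g\smsh g$ on $S^{V\oplus V}$. It also requires checking that the intermediate homotopy is compatible with the free part. My first approach would be to work with an explicit $\Bbb Z_2$-equivariant representative of $g\smsh g$ on $S^{V\oplus V}$, or, failing that, to quote \cite[defn.~7.8, 7.9]{Crabb-Ranicki}, where these identities are established. The equivariance in $\pi$ is preserved throughout, since $\pi$ acts only on the $Y$ coordinates and commutes with all the constructions.
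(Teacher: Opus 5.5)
Your proposal is correct and follows essentially the paper's route. Your free-part difference class of $F\wedge F$ and $\Delta\circ F$ is exactly the class whose image under the tom Dieck split injection $\iota_B$ is $(f\wedge f)\circ\Delta_A-\Delta_B\circ f$. The paper likewise obtains (2) by forgetting the $\mathbb{Z}_2$-equivariance and (3) formally from this identity, which is your insertion of the intermediate term $(g\wedge g)\circ\Delta_B\circ f$. The only difference is that you sketch the Cartan formula directly through the pinch-sum decomposition, whereas the paper simply cites Crabb--Ranicki, prop.~5.33.
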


\begin{rem} We will not provide the proof of Lemma \ref{lem:Hopf-properties} here. However, it is worth noting that
properties (2) and (3) are a formal consequence of the defining identity
\[
\iota_B H(f) := (f\smsh f)\circ \Delta_A - \Delta_B \circ f\, ,
\]
in which $\iota_B\: \{A,D_2(B)\}_{T_\ast(\pi)} \to \{A,B\smsh B\}_{T_{\pi}(\ast) \times \underline{\Bbb Z_2}}$.
The target of $\iota_B$ is
\[
\colim_{V\subset \cal U} \{S^V \smsh A, S^V \smsh (B\smsh B)\}_{T_{\pi \times \Bbb Z_2}(\ast)}\, ,
\]
in which the colimit is over the finite dimensional representations $V$ which are the  indexing spaces of a complete $\Bbb Z_2$-universe $\cal U$ and
$S^V$ denotes the one-point compactification of $V$ \cite{Greenlees-May}. The homomorphism $\iota_B$ is a split injection by
the tom Dieck splitting \cite[ch.~5]{May_equivariant}. Note that the composition of $\iota_B$ followed by forgetful homomorphism
$\{A,B\smsh B\}_{T_{\pi}(\ast) \times \underline{\Bbb Z_2}} \to \{A,B\smsh B\}_{T_{\pi}(\ast)}$ yields the transfer (and therefore property (2)).
 As to property (1), we refer the reader to \cite[prop.~5.33]{Crabb-Ranicki}.
\end{rem}

\subsection{The coaction} Suppose
that
\begin{equation} \label{eqn:cofiber-sequence}
A\to X \to X/A
\end{equation}
is a cofibration sequence of $T_\ast(\pi)$. Let
\[
c\in [X/A,X/A \vee \Sigma A]_{T_\ast(\pi)}
\]
denote the coaction element defined by the homotopy class defined by the diagram
\[
X/A @< \simeq << X \cup CA @>>> X/A \vee \Sigma A\, ,
\]
where the map $X \cup CA @>>> X/A \vee \Sigma A$ is the effect of pinching $A$ to a point.
Let
\[
\delta\in \{X/A,\Sigma A\}_{T_\ast(\pi)}
\] 
be the equivariant stable homotopy class of the Barratt-Puppe extension of
the cofiber sequence \eqref{eqn:cofiber-sequence} as defined by the diagram
\[
X/A @< \simeq << X \cup CA @>>> \Sigma A\, ,
\]
in which the map $X \cup CA @>>> \Sigma A$ is obtained by pinching $X$ to a point.

Then we have an action
\[
\{X/A,Y\}_{T_\ast(\pi)} \times \{\Sigma A,Y\}_{T_\ast(\pi)} \to \{X/A,Y\}_{T_\ast(\pi)}
\]
given by $(f,\alpha) \mapsto f \sharp \alpha$, where $f\sharp \alpha$ is the composition
\[
X/A @> c >> X/A \vee \Sigma A @> f \vee \alpha >> Y \vee Y @> \text{fold} >> Y\, .
\] 

\begin{lem}[Action Lemma] \label{lem:coaction} With respect to the above,
$H(f\sharp \alpha) \in \{X/A,D_2(Y)\}_{T_\ast(\pi)}$
is given by
\[
H(f) + H(\alpha)\circ \delta \, .
\]
\end{lem}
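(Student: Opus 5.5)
We prove the Action Lemma by writing $f\sharp\alpha$ as a composite and a sum of stable maps, then applying the Composition and Cartan Formulas of Lemma~\ref{lem:Hopf-properties}.

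\textbf{Decomposition.} Let $p_1\colon X/A\vee\Sigma A\to X/A$ and $p_2\colon X/A\vee \Sigma A\to \Sigma A$ be the collapse maps. Stably, $X/A\vee\Sigma A$ is a biproduct, so the fold map composed with $f\vee\alpha$ equals $f\circ p_1+\alpha\circ p_2$. Since $p_1\circ c\simeq \mathrm{id}$ and $p_2\circ c=\delta$, we obtain in $\{X/A,Y\}_{T_\ast(\pi)}$
\[
f\sharp\alpha = f + \alpha\circ\delta .
\]

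\textbf{Applying the formulas.} The Cartan Formula gives
\[
H(f\sharp\alpha)=H(f)+H(\alpha\circ\delta)+f\cup_2(\alpha\circ\delta).
\]
For the middle term, the Composition Formula gives
\[
H(\alpha\circ\delta)=H(\alpha)\circ\delta + D_2(\alpha)\circ H(\delta).
\]
It therefore suffices to show that $H(\delta)=0$ and $f\cup_2(\alpha\circ\delta)=0$.

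\textbf{Vanishing of $H(\delta)$.} The class $\delta$ is represented by an unstable map $X\cup CA\to\Sigma A$, obtained by pinching $X$ to a point. By naturality, $H$ of an unstable map vanishes, because $h\circ j$ has a preferred null-homotopy. Hence $H(\delta)=0$.

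\textbf{Vanishing of the cup term.} Both $f$ and $\alpha\circ\delta$ factor through $X\cup CA$. The cup product is computed using the reduced diagonal
\[
\Delta\colon X\cup CA\to (X\cup CA)\wedge (X\cup CA),
\]
followed by $f\wedge(\alpha\circ\delta)$. The second factor $\alpha\circ\delta$ factors through the pinch $X\cup CA\to \Sigma A$. So we compose $\Delta$ with $1\wedge \text{pinch}$, landing in $(X\cup CA)\wedge \Sigma A$.

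The key observation is that the diagonal of $X\cup CA$ followed by $1\wedge\text{pinch}$ factors through $(X\cup CA)\wedge\Sigma A\to (X/A)\wedge\Sigma A$ via a map that is null on the cone coordinate. Concretely, a point of the cone $CA$ has first coordinate in $CA$, and the first factor $f$ is defined on $X\cup CA\simeq X/A$. One would like to argue that the relevant composite factors through $X\wedge \Sigma A$ restricted along the contractible $CA$.

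Cleaner route: this is the standard fact that cup products vanish on a suspension, applied to the coaction. The composite $(p_1\wedge p_2)\circ\Delta_{X/A\vee\Sigma A}\colon X/A\vee\Sigma A\to X/A\wedge \Sigma A$ is null, since the reduced diagonal of a wedge lands in the wedge of the diagonals and the cross terms vanish. Precomposing with $c$ gives
\[
\Delta_{X/A}\text{-cup of }(p_1c,\ p_2c)=(p_1\wedge p_2)\circ\Delta\circ c=0,
\]
using naturality of the diagonal with respect to the unstable map $c$.

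Thus $f\cup(\alpha\circ\delta)=(f\wedge\alpha)\circ(p_1\wedge p_2)\circ\Delta\circ c=0$, and so $f\cup_2(\alpha\circ\delta)=0$. The main care needed is equivariance, but every map used above ($c$, $p_1$, $p_2$, and the diagonal) is $\pi$-equivariant and unstable, so the argument goes through in $T_\ast(\pi)$.

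Combining the steps gives $H(f\sharp\alpha)=H(f)+H(\alpha)\circ\delta$.
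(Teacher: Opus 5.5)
Your proof is correct and is essentially the paper's argument, only reordered: you first simplify $f\sharp\alpha = f + \alpha\circ\delta$ and then apply the Cartan formula together with $H(\delta)=0$. The paper instead applies the Cartan formula to $f\circ p_1+\alpha\circ p_2$ on $X/A\vee\Sigma A$ and then uses that $c$, $p_1$, $p_2$ are unstable, while the vanishing of the cross term via $(p_1\wedge p_2)\circ\Delta\circ c=(1\wedge\delta)\circ\Delta=0$ is the same in both (the tentative paragraph before your ``cleaner route'' is superfluous and can be deleted).
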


\begin{proof} The homotopy class $f\sharp \alpha$ is given by
\[
(f \circ p_1 + \alpha \circ p_2) \circ c\, ,
\]
where $p_1 \: X/A \vee \Sigma A @>p_1 >> X/A$ and $p_2 \: X/A \vee \Sigma A @>p_2 >> \Sigma A$
are the projections. Then by the composition formula, the Cartan formula, and the fact that $p_1,p_2$ and $c$ are 
defined unstably, we have
\begin{align*}
H((f \circ p_1 + \alpha \circ p_2) \circ c) &= H(f \circ p_1 + \alpha \circ p_2)\circ c\, , \\
&= H(f)\circ p_1\circ c + (f \circ p_1) \cup (\alpha \circ p_2) + H(\alpha)\circ p_2\circ c\, ,  \\
&= H(f) +   ((f \circ p_1) \cup (\alpha \circ p_2))+  H(\alpha) \circ \delta\, ,
\end{align*}
where in the last line we have used the fact that $\delta = p_2\circ c$.

Consequently, it is enough to show that $(f \circ p_1) \cup (\alpha \circ p_2) $ is trivial. By definition, the latter is given by the composition
\begin{equation*}
\mbox{\Small $\xymatrix{
X/A \ar[r]^(.4){c}  & X/A \vee \Sigma A \ar[r]^(.33){\Delta} &  (X/A \vee \Sigma A) \smsh (X/A \vee \Sigma A)\ar[d]^{p_1 \smsh p_2}  \\
&& X/A \smsh \Sigma A \ar[r]^{f\smsh \alpha} &  X/A \smsh X/A \ar[r] & D_2(X/A)
}\, , $}
\end{equation*}
where $\Delta$ is the diagonal. Since $\Delta \circ c$ coincides with the composition
\[
X/A @> \Delta >> X/A \smsh X/A @> c\smsh c >> (X/A \vee \Sigma A) \smsh (X/A \vee \Sigma A)\, ,
\]
we infer
\begin{align*}
(p_1 \smsh p_2) \circ \Delta \circ c &= (p_1 \smsh p_2) \circ (c\smsh c) \circ \Delta \, , \\
& = (p_1 \circ c) \smsh (p_2\circ c) \circ \Delta \, ,\\
&  = (1 \smsh \delta) \circ \Delta \, ,\\
&= 0\, ,
\end{align*}
where we have observed that $(1 \smsh \delta) \circ \Delta$ is trivial since it coincides with the composition
\[
X/A @> c >> X/A \vee \Sigma A @>\ast >> X/A \smsh \Sigma A\, . 
\]
The result follows.
\end{proof}

\section{The relative Hopf invariant} \label{sec:rel-Hopf}
Fix a cofibrant object $B\in T_\ast(\pi)$ and a cofibration
 $A\to X$ of $T_B(\pi)$. Assume in addition that $A\in T_B(\pi)$ is cofibrant.
 Let $Y \in T_A(\pi)$ be an object. 
We consider stable maps $X\to Y$ which are 
fixed on $B$ and unstable on $A$. The set of such homotopy classes is, by definition,
\[
\{X,Y; A \rel B\}_{T(\pi)}  := \{X,Y; A\}_{T_B(\pi)} = [X,Q(Y); A]_{T_B(\pi)}\, ,
\]
where $Q(Y) \in T_A(\pi)$ is viewed as an object with structure map given by the composition $A \to Y \to Q(Y)$.

 \begin{ex} When $A = B$, we have
  \[
\{X,Y;B \rel B\}_{T_\ast(\pi)} = \{X,Y \rel B\}_{T_\ast(\pi)} =  \{X,Y\}_{T_B(\pi)} \, .
 \]
 When $B = \ast$, we have
 \[
 \{X,Y;A \rel \ast\}_{T_\ast(\pi)} =  \{X,Y;A\}_{T_\ast(\pi)} \, .
 \]
 \end{ex}

The composition
\[
Y @>>> Q(Y) @>h>> Q(D_2(Y))
\]
has a preferred null homotopy. Consequently, it determines a commutative
 square
\begin{equation} \label{eqn:EHP}
\xymatrix{
Y \ar[r] \ar[d] & CY \ar[d]\\
Q(Y) \ar[r]_(.4)h & Q(D_2(Y)),
}
\end{equation}
where $CY$ is the cone on $Y$. The square is $(3r+1)$-Cartesian \cite{Milgram}.

The induced function of homotopy classes
\[
[X,Q(Y); A]_{T_B(\pi)} \to [X,Q(D_2(Y));  A]_{T_B(\pi)} 
\]
arising from the  square is therefore a function of stable homotopy classes 
\begin{equation} \label{eqn:rel-Hopf}
H_A \: \{X,Y; A \rel B\}_{T_\ast(\pi)} \to  \{X/A,D_2(Y)\}_{T_\ast(\pi)}\, ,
\end{equation}
where $[X,Q(D_2(Y));  A]_{T_B(\pi)}$  coincides  with $\{X/A,D_2(Y)\}_{T_\ast(\pi)}$
since compositions of the form $A \to Y \to Q(Y) \to Q(D_2(Y))$ have a preferred null homotopy.
We call $H_A$  the {\it relative Hopf invariant}.


Observe that the composition
\[
[X,Y \rel B]_{T_\ast(\pi)} @> E >> \{X,Y;A \rel B\}_{T_\ast(\pi)} @>H_A >>  \{X/A,D_2(Y)\}_{T_\ast(\pi)}
\]
is trivial. Recall the notion of connectivity and relative dimension from Definition \ref{defn:conn-dim}.

\begin{lem} \label{lem:desuspension} Assume $Y \in T_{\ast}(\pi)$ is $r$-connected and $\dim_A X \le 3r+1$.
Let $f\in \{X,Y; A\rel B\}_{T_\ast(\pi)} $. Then $f$ is in the image of $E$ if $H_A(f) = 0$.
\end{lem}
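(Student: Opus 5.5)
We work with the commutative square \eqref{eqn:EHP}, which is $(3r+1)$-Cartesian because $Y$ is $r$-connected (Milgram), together with equivariant obstruction theory in $T_B(\pi)$. An element $f\in\{X,Y;A\rel B\}_{T_\ast(\pi)}$ is represented by a map of pairs $(X,A)\to(Q(Y),Y)$ in $T_B(\pi)$. Equivalently, it is a map $X\to Q(Y)$ together with a lift of its restriction to $A$ through $Y$. The hypothesis $H_A(f)=0$ says that the composite $X\to Q(Y)\to Q(D_2(Y))$, combined with the preferred null homotopy on $A$ coming from the square, is null homotopic rel $A$. The goal is to build a map $X\to Y$ rel $A$, whose stabilization is then automatically $f$.

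First, replace the square by the homotopy pullback $P := Q(Y)\times^h_{Q(D_2(Y))} CY$. Since $CY$ is contractible, $P$ is the homotopy fiber $F$ of $h$. There is a comparison map $Y\to F$, and the $(3r+1)$-Cartesian property says exactly that this map is $(3r+1)$-connected. All maps involved are $\pi$-equivariant and natural, so $F$ and the comparison map live in $T_\ast(\pi)$, and the connectivity is measured in $T$.

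Second, use $H_A(f)=0$ to produce a lift $X\to F$. A null homotopy of $h\circ f$ rel $A$, compatible with the given null homotopy on $A$, is precisely a lift of $f$ to the homotopy fiber $F$ extending the composite $A\to Y\to F$. So we obtain a map $X\to F$ in $T_B(\pi)$ under $A$.

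Third, desuspend via obstruction theory. We need to compress $X\to F$, rel $A$, through the $(3r+1)$-connected map $Y\to F$. Replace $Y\to F$ by a fibration; its fiber is $3r$-connected. Since $\dim_A X\le 3r+1$, $X$ is a retract of a relative free $\pi$-CW complex built from $A$ with cells of dimension at most $3r+1$. A free equivariant cell $D^j\times\pi$ needs only a nonequivariant lift of one cell. Lifting cell by cell works because the obstructions lie in $\pi_{j-1}$ of the fiber, which vanishes for $j\le 3r+1$. Retracts are handled by lifting on the complex and restricting.

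The resulting lift $X\to Y$ is fixed on $B$ and agrees with the given map on $A$. Its stabilization recovers $f$, because the composite $Y\to F\to Q(Y)$ is the inclusion $j$. This gives $f\in\operatorname{im}E$.

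The main obstacle is the second step. One must check that the preferred null homotopy of $A\to Y\to Q(D_2(Y))$ used to define $H_A$ in \eqref{eqn:rel-Hopf} is the same one encoded in the square. That identification is what makes the vanishing of $H_A(f)$ in $\{X/A,D_2(Y)\}$ correspond exactly to a relative lift into $F$. I would argue it directly from the construction of \eqref{eqn:rel-Hopf} as the map induced by the square.
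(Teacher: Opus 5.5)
Your proposal is correct and is essentially the paper's argument: the paper's proof is the single remark that the lemma follows from the square \eqref{eqn:EHP} being $(3r+1)$-Cartesian. Your three steps are exactly the unpacking of that remark: you read a relative null homotopy of $h\circ f$ as a lift into the homotopy fiber $F$, and then lift rel $A$ through the $(3r+1)$-connected map $Y\to F$ over free cells of dimension $\le 3r+1$. The compatibility you flag as the main obstacle holds by construction, since $H_A$ is defined in \eqref{eqn:rel-Hopf} as the function induced by that very square; concretely, a null homotopy of the representative on $X\cup CA$ becomes, after a homotopy-extension adjustment along the cofibration $A\to X$, a null homotopy of $h\circ f$ that restricts to the preferred one on $A$.
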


\begin{proof}  
This is a straightforward consequence of the fact that the square
\eqref{eqn:EHP} is $(3r+1)$-Cartesian.
\end{proof}

\subsection{Desuspending maps of retractive spaces} \label{subsec:retract-desuspend}
The obstruction to desuspending a fiberwise map in the metastable range
necessitates the introduction of a fiberwise Hopf invariant, the latter which we prefer to avoid for technical reasons. 
Fortunately, when it comes to Poincar\'e surgery, the degree of instability that  arises is just one dimension outside of
the stable range, i.e., the ``fringe'' dimension. In the fringe dimension,  one can exchange fiberwise homotopy theory with 
equivariant homotopy theory with respect to the fundamental group. This is implemented below.

Let $X \in T$ be cofibrant and connected. 
Let $\tilde X\to X$ denote the universal cover with group of deck transformations $\pi$. If $Y \in T_{/X}$ is an object, then
the fiber product
\[
 \tilde Y = Y \times_X \tilde X \in T(\pi)_{/\tilde X}
\]
is a space over $\tilde X$ with $\pi$-action.
Similarly, if $Y\in R(X)$ is an object, then $\tilde Y \in R(\tilde X,\pi)$.
Then the quotient
\[
Y^{\sharp} := \tilde Y/\tilde X \in T_\ast(\pi)
\]
is a  based $\pi$-space which is cofibrant if $Y$ is cofibrant.

\begin{ex} If $U \in T_{/X}$ is an object, then $U^+ = U \amalg X \in R(X)$, and
\[
(U^+)^\sharp  = \tilde U \amalg \ast = \tilde U_+\, .
\]
\end{ex}

The operation $Y \mapsto Y^\sharp$ defines a functor 
\[
({-})^\sharp\: R(X) \to T_\ast(\pi)\, .
\]
In particular, for objects $Y,Z\in R(X)$, and $A\to Y$ a cofibration, we obtain  functions of hom-sets
\[
({-})^\sharp \: [Y,Z]_{R(X)} \to [Y^\sharp,Z^\sharp]_{T_\ast(\pi)} \quad \text{ and } \quad ({-})^\sharp \: \{Y,Z;A\}_{R(X)} \to \{Y^\sharp,Z^\sharp; A^\sharp\}_{T_\ast(\pi)} \, .
\]

Consider the stabilization map
\[
 E\: [Y,Z]_{R(X)} \to \{Y,Z; A\}_{R(X)}
 \]
\begin{lem}[Fringe Lemma] \label{lem:fringe} Suppose that $Z$ is $r$-connected. Then 
an element of $f\in \{Y,Z; A\}_{R(X)}$ is in the image of $E$ if $\dim_A Y \le 2r+1$. 

If $\dim_A Y = 2r+2$, and $r \ge 1$, then $f$ is in the image of $E$
if and only if 
\[
H_{A^{\sharp}}(f^\sharp) \in \{Y^\sharp,D_2(Z^\sharp); A^\sharp\}_{T_\ast(\pi)}
\]
is trivial.
\end{lem}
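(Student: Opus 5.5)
Our plan is to reduce the fiberwise desuspension question over $X$ to an equivariant one over a point via the functor $({-})^\sharp$, and then apply Lemma \ref{lem:desuspension}. First we set up the obstruction theory. The fiberwise stabilization $Z \to Q_X(Z)$ (fiberwise $\Omega^\infty\Sigma^\infty$ in $R(X)$) is, by the fiberwise Freudenthal theorem, $(2r+1)$-connected when $Z$ is $r$-connected. A relative lifting problem for the pair $(Y,A)$ of dimension $\dim_A Y \le n$ against an $m$-connected map is solvable when $n\le m$, and the lift is unique up to homotopy when $n< m$. This gives the first assertion immediately when $\dim_A Y \le 2r+1$. (Cofibrancy of $Y$ over $A$ and fibrant replacement in $R(X)$ are routine.)

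For $\dim_A Y = 2r+2$ there is a single obstruction. We will identify it as a class in equivariant cohomology
\[
H^{2r+2}_\pi(\tilde Y,\tilde A;\ \pi_{2r+2}(\mathrm{hofib}))\, ,
\]
where $\mathrm{hofib}$ is the fiber of $Z\to Q_X Z$ over points of $X$. Here we use that $\tilde Y$ is built from $\tilde A$ by free $\pi$-cells, and that cells of $Y$ relative to $A$ over $X$ correspond to free $\pi$-cells over the simply connected $\tilde X$. The key point is the fringe phenomenon. Since $\tilde X$ is simply connected, in dimension $2r+2$ the fiberwise obstruction over $\tilde X$ agrees with the obstruction for the quotient spaces $\tilde Y/\tilde X$. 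Concretely, the map $\tilde Z\to \tilde Z/\tilde X=Z^\sharp$ from the fiber of $\tilde Z\to\tilde X$ is $(r+2)$-connected, because the base is $1$-connected and the fiber is $r$-connected. Hence the comparison of the first nontrivial ``EHP'' fibers, i.e. the fiberwise and the based versions of the quadratic term $D_2$, is an isomorphism on $\pi_{2r+2}$ once $r\ge 1$. Equivalently, the square
\[
\begin{matrix} [Y,Z;\rel A]_{R(X)} & \to & [Y^\sharp,Z^\sharp;\rel A^\sharp]_{T_\ast(\pi)}\\ \{Y,Z;A\}_{R(X)} & \to & \{Y^\sharp,Z^\sharp;A^\sharp\}_{T_\ast(\pi)} \end{matrix}
\]
should induce a bijection on desuspension obstructions in this dimension.

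With this identification, Lemma \ref{lem:desuspension} applies to $f^\sharp$: $Z^\sharp$ is $r$-connected and $2r+2\le 3r+1$ for $r\ge1$. Using the $(3r+1)$-Cartesian square \eqref{eqn:EHP}, the primary obstruction to desuspending $f^\sharp$ rel $A^\sharp$ is exactly $H_{A^\sharp}(f^\sharp)$. So the plan is to show that the fiberwise obstruction for $f$ maps isomorphically to the equivariant obstruction for $f^\sharp$. Then $f$ desuspends if and only if $H_{A^\sharp}(f^\sharp)=0$. The ``only if'' direction is formal, since $H_A\circ E=0$ and $(-)^\sharp$ commutes with $E$.

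The main obstacle is this comparison step: showing that the relevant homotopy group of the fiber of $Z_x\to Q(Z_x)$ matches that of the fiber of $Z^\sharp\to Q(Z^\sharp)$ (or of $\Omega^\infty\Sigma^\infty D_2$) as $\mathbb Z[\pi]$-modules in degree $2r+2$. The worry is that the quotient $\tilde Z/\tilde X$ mixes the fibers over different points of $\tilde X$. We would first try a Blakers--Massey/Serre spectral sequence argument. The $E_2$-term $H_0(\tilde X;\pi_*)$ contributes the only nonzero term in the relevant range, since $H_1(\tilde X)=0$ and $H_i(\tilde X;\pi_{2r+2-i}^{\text{fib}})$ for $i\ge2$ involves the quadratic groups below their first nonzero degree $2r+2$. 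This forces the comparison to be an isomorphism in exactly the fringe degree.
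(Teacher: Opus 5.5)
Your proposal is essentially the paper's argument. The paper passes through $\mathrm{ho}R(X)\cong \mathrm{ho}T_\ast(G)$ for a group model $G$ of $\Omega X$ and writes $({-})^\sharp$ as induction along $G\to\pi_0G=\pi$. It then uses exactly your estimates: the map from the fiber $C$ to $C\wedge_G(\pi_+)\simeq Z^\sharp$ is $(r+2)$-connected, so $Q(D_2(C))\to Q(D_2(C\wedge_G(\pi_+)))$ is $(2r+3)$-connected. Naturality of the $(3r+1)$-Cartesian EHP squares, together with $r\ge 1$, then makes the comparison square $(2r+2)$-Cartesian, which is the packaged form of your comparison of obstruction classes.

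Two small corrections. The ``main obstacle'' you flag is already settled by the connectivity estimate in your own second paragraph, so the spectral-sequence detour is unnecessary. Also, the obstruction coefficients are $\pi_{2r+1}$ of the homotopy fiber of $Z_x\to Q(Z_x)$, which is $\pi_{2r+2}(Q(D_2(Z_x)))$, not $\pi_{2r+2}$ of that fiber.
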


\begin{proof} The first part is a direct consequence of the Freudenthal suspension theorem in the fiberwise context (see e.g., \cite[note 4.10]{Klein_haef}).
For the second part, we recall the well-known fact: Let $U \to X$ be universal principal bundle with structure group $G$; then $U$ is contractible and
$X \simeq BG$.

Without loss in generality, we may assume that $G$ is cofibrant. Then
there is an isomorphism of homotopy categories:
\[
\text{ho}R(X) \cong \text{ho}T_{\ast}(G)\, , 
\]
where $T_{\ast}(G)$ is the model category of based left $G$-spaces. The equivalence is the derived functor of the functor
which assigns to a retractive space $Y$ the based $G$-space $\bar Y := (Y \times_X U)/(X\times_X U)$.

Let $G\to \pi_0(G) = \pi$ be the map to path components. Then the functor $\sharp$ factors as
\[
R(X)@>>> T_{\ast}(G) @>I_G^\pi >> T_{\ast}(\pi) 
\]
where the second functor in the composition is given by induction $B \mapsto B\smsh_G (\pi_+)$.

By obstruction theory, it suffices to show that for an $r$-connected cofibrant object $C \in T_{\ast}(G)$, the commutative
square
\[
\xymatrix{
C \ar[r] \ar[d] & C\smsh_G(\pi_+) \ar[d]\\
Q(C) \ar[r] & Q(C\smsh_G(\pi_+))
} 
\]
is $(2r+2)$-Cartesian (since the left vertical map induces stabilization in $\text{ho}T_\ast(G)$ and right vertical map induces
stabilization in $\text{ho}T_\ast(\pi)$). To see this, first observe that the map $C\to C\smsh_G(\pi_+)$ is $(r+2)$-connected, since it is given by smashing
the 1-connected map $G_+\to \pi_+$ with $C$ and taking $G$-orbits. Consequently, the map
\[
Q(D_2(C)) \to Q(D_2(C\smsh_G(\pi_+)))
\]
is $(2r+3)$-connected since it is given by applying homotopy $\Bbb Z_2$-orbits and then $Q$ to the $(2r+3)$-connected map
$C \smsh C \to (C\smsh_G(\pi_+)) \smsh (C\smsh_G(\pi_+))$.
 
By the naturality of the $(3r+1)$-cartesian squares \eqref{eqn:EHP} for $C$ and $C\smsh_G(\pi_+)$, and the fact that $2r+2 \le 3r+1$
is equivalent to $r\ge 1$, we infer that the square is $(2r+2)$-Cartesian. 
\end{proof}

\subsection{Gluing} \label{subsec:gluing} Fix a cofibrant object $A\in T_\ast(\pi)$  and cofibrant objects $X,Y \in T_A(\pi)$.
For a subset $U \subset \Bbb R$, we set
\[
A_U := (A \times U)/(\ast \times U) = A \smsh (U_+) \in T_\ast(\pi)\, .
\]
By convention, the basepoint of $I = [0,1]$ is $0 \in I$.
Then $A_{\partial I} = A_0 \vee A_1$.

Consider  $\{A_I,Y; A_{\partial I} \rel A_0\}_{T_\ast(\pi)}$, the latter consisting of 
the homotopy classes of equivariant stable maps $A_I \to Y$ which are fixed on $A_0$ and which are unstable on
$A_1$.

\begin{defn} \label{defn:gluing}
The {\it gluing operation} $\Theta$:
\[
 \{X,Y;\rel A\}_{T_\ast(\pi)} \times \{A_I,Y; A_{\partial I} \rel A_0\}_{T_\ast(\pi)}\to 
\{ X\cup_{A_0} A_I,Y; A_1 \}_{T_\ast(\pi)} = 
\{ X,Y; A \}_{T_\ast(\pi)}
\]
is defined by \[
\Theta(f,\alpha)= f\cup_{A_0} \alpha\, ,
\] 
i.e., we amalgamate $f$ and $\alpha$ along $A_0$ and observe that the resulting map $f\cup_{A_0} \alpha$ is then unstable when restricted to $A_1$. 
\end{defn}

\begin{rem} If the structure map $A\to Y$ is constant, then the gluing operation simplifies to 
\[
\{X/A,Y\}_{T_\ast(\pi)} \times \{CA_1, Y; A_1\}_{T_\ast(\pi)}\to \{X,Y; A\}_{T_\ast(\pi)}\, ,
\]
where $CA= A_I/A_0$ is the cone on $A_1$. In this case, there is
a preferred homomorphism
\[
 \{CA_1, Y; A_1\}_{T_\ast(\pi)} \to \{\Sigma A,D_2(Y)\}_{T_\ast(\pi)} \, .
\]
Assume
 $Y$ is $r$-connected. If $\dim_A X \le 3r+1$ then the homomorphism
 is surjective.  It is injective if $\dim_A X \le 3r$.
\end{rem}

Recall that $\delta\: X/A \to \Sigma A$ is the Barratt-Puppe extension to the cofibration 
sequence $A\to X \to X/A$.

\begin{lem}[Gluing Lemma] \label{lem:gluing} 
$H_{A_1}(\Theta(f,\alpha)) = H_A(f) + H_{A_{\partial I}}(\alpha) \circ \delta$.
\end{lem}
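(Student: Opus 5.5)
The plan is to work directly with the representing maps: $H_{A_1}$, $H_A$ and $H_{A_{\partial I}}$ are all defined by the same commutative square \eqref{eqn:EHP}. Each of them is obtained from a map $h\circ g$ into $Q(D_2(Y))$, together with the preferred null homotopy of $Y\to Q(Y)\to Q(D_2(Y))$ restricted to the subspace on which $g$ is unstable. We then read off the resulting extension over the mapping cone.

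Concretely, $\Theta(f,\alpha)$ is a stable map on $X\cup_{A_0}A_I$ that is unstable on $A_1$. Hence $H_{A_1}(\Theta(f,\alpha))$ is represented by
\[
X\cup_{A_0}A_I\cup CA_1 \longrightarrow Q(D_2(Y)),
\]
given by $h\circ f$ on $X$, by $h\circ\alpha$ on $A_I$, and by the preferred null homotopy on $CA_1$. We identify the source with $X/A$ via the evident equivalences. The key step is to enlarge the domain to
\[
Z := X\cup_{A_0} CA_0 \cup_{A_0} A_I\cup_{A_1} CA_1 ,
\]
where the extra cone $CA_0$ is attached along the common copy of $A_0$. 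The map extends over $CA_0$ using the preferred null homotopy. This extension is consistent because $f$ and $\alpha$ both restrict on $A_0$ to the structure map $A\to Y$, which is unstable; on $A_0$ the two null homotopies are therefore literally the same, by naturality of the null homotopy in \eqref{eqn:EHP}.

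The restriction of this extended map to $X\cup CA_0$ represents $H_A(f)$. Its restriction to $A_I\cup CA_0\cup CA_1\simeq A_I/A_{\partial I}\simeq\Sigma A$ represents $H_{A_{\partial I}}(\alpha)$. Collapsing $CA_0$ gives a homotopy equivalence
\[
Z\simeq Z/CA_0 = (X\cup CA_0)/CA_0\vee (A_I\cup CA_0\cup CA_1)/CA_0 \simeq X/A\vee\Sigma A .
\]
The extended map factors through this collapse up to homotopy, since the contractible cone $CA_0$ may be compressed to the basepoint rel the two pieces. The composite
\[
X\cup_{A_0}A_I\cup CA_1\subset Z\to X/A\vee\Sigma A
\]
is the coaction $c$ of \S\ref{sec:Hopf-invariant}, read through the identification $X\cup_{A_0}A_I\cup CA_1\simeq X\cup CA$, in which the collar $A_I$ plays the role of the cone coordinate. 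It follows that $H_{A_1}(\Theta(f,\alpha))$ is the composite
\[
X/A\xrightarrow{\;c\;}X/A\vee\Sigma A\xrightarrow{\;H_A(f)\vee H_{A_{\partial I}}(\alpha)\;}Q(D_2(Y)).
\]

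Since the target $Q(D_2(Y))$ is an infinite loop space, a map out of a wedge into it is the sum of its restrictions. Hence this composite equals
\[
H_A(f)\circ p_1\circ c+H_{A_{\partial I}}(\alpha)\circ p_2\circ c = H_A(f)+H_{A_{\partial I}}(\alpha)\circ\delta,
\]
using $p_1\circ c=\mathrm{id}$ and $p_2\circ c=\delta$. Note that, unlike the Action Lemma, no Cartan cross term arises: we are adding maps into $Q(D_2(Y))$ after applying $h$, not applying $h$ to a sum.

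The main obstacle is the bookkeeping in the key step. One must check that the identification of $(X\cup_{A_0}A_I)/A_1$ with $X/A$ used in defining $H_{A_1}$ agrees with the one under which the inclusion into $Z$ becomes $c$. One must also check that all the null homotopies are the preferred natural ones, so that the extension over $CA_0$ is canonical and equivariant. I would handle both by doing every construction in $T_\ast(\pi)$ functorially in the pair, and by invoking naturality of the square \eqref{eqn:EHP} along the map of pairs $(A_I,A_{\partial I})\to(X\cup_{A_0}A_I,A_0\vee A_1)$.
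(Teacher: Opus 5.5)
Your proposal is correct and follows the same route as the paper. Both arguments factor the relative Hopf invariant of the glued map through the pinch of $A_0$, identify that pinch with the coaction $c\colon X/A\to X/A\vee\Sigma A$, and split $c$ as $i_1+i_2\circ\delta$ (equivalently, use additivity of maps into $Q(D_2(Y))$) to obtain $H_A(f)+H_{A_{\partial I}}(\alpha)\circ\delta$; your auxiliary cone $CA_0$ just makes explicit the factorization through $X/A\vee\Sigma A$ that the paper asserts ``by definition.''
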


\begin{proof} By definition, $H_{A_1}(\Theta(f,\alpha))  := H_{A_1}(f\cup_{A_0} \alpha)$ factorizes as
\[
(X \cup_{A_0} A_I)/A_1 @> p >> X/A_0 \vee A_I/A_{\partial I} @>H_{A}(f) \vee H_{A_{\partial I}}(\alpha) >> D_2(Y) \vee D_2(Y) @> \text{fold} >> D_2(Y)\, ,
\]
where the map $p$ pinches $A_0$ to a point. Note that $(X \cup_{A_0} A_I)/A_1  = X \cup_{A_0} CA_0$ and $A_I/A_{\partial I} = \Sigma A$. 
Moreover, $p$ is identified with the coaction map $c\: X/A \to X/A \vee \Sigma A$
of the cofiber sequence $A\to X \to X/A$. The result follows since $c$ is stably homotopic to the sum $i_1+ i_2\circ \delta$,
where $i_1\: X/A \to X/A \vee \Sigma A$ and $i_2\: \Sigma A \to X/A \vee \Sigma A$ are the inclusions to the wedge summands.
\end{proof}

\begin{rem} The Gluing Lemma can be viewed as a particular case of the Action Lemma by 
considering  constant map $A_1\to Y$ and the forgetful function
\[
 \{\Sigma A,Y\} = \{CA_1, Y\}_{T_{A_1}(\pi)}  \to \{CA_1, Y; A_1\}_{T_\ast(\pi)}\, .
 \]
We leave the details to the reader.
\end{rem}

\section{Proof of Theorem \ref{bigthm:compression}} \label{sec:compression}

Let $(P;\partial_0 P,\partial_1 P)$ be an $d$-dimensional Poincar\'e triad and
suppose that $(M;\partial_0 M,\partial_1 M)$ is another Poincar\'e triad of dimension $d$, with
 $(\partial_0 M,\partial_{01} M) = (\partial_0 P,\partial_{01} P)$.
Recall the notion of embedding (relative to $\partial_0 P$) as in the introduction.

\begin{proof}[Proof of Theorem \ref{bigthm:compression}]  The argument
is essentially the same as that of \cite[thm.~A]{Klein_compression} with some attention given
to the partial boundary. To
 avoid clutter, we introduce the following notation: If $X$ is a set and
$S\subset \Bbb R$ is a subset, then we let $X_S = X\times S$. Similarly, if $s\in \Bbb R$,
we let $X_s = X \times \{s\}$.\footnote{The author is aware that the current notation conflicts with the way it is used in 
\S\ref{subsec:gluing}.  This should not cause confusion as the contexts are distinct.}

The given embedding $\partial_0 P \to \partial M$ 
and a choice of null homotopy of $\nu$ determines a map
\[
\phi\: (M_0 \cup P_{[0,1/3]} , M_0 \cup \partial_0 P_{[0,1/3]} \cup P_{1/3} ) \to (W,\partial W)\,
\]
which restricts to an embedding $M_0 \cup \partial_0 P_{[0,1/3]} \cup P_{1/3}  \to \partial W$.

The map $M_0 \cup P_{[0,1/3]} \to W$ is $(d-p-1)$-connected, since the composition
$M_0 \cup P_{[0,1/3]} \to W \to M_I$ is a weak equivalence and the map $W\to M_I$ is $(d-p)$-connected (the latter
follows from the fact that $\Sigma_P \partial_1 P \to P_I$ is $(d-p)$-connected). It is also readily verified that up to homotopy,
$M_0 \cup P_{[0,1/3]}$ is obtained from $M_0 \cup \partial_0 P_{[0,1/3]} \cup P_{1/3}$ by attaching cells of dimension $\le p+1$.
Consequently, by \cite[thm.~A]{Klein_haef2}, the map $\phi$ underlies a relative embedding
\[
(\bar M, M_0 \cup \partial_0 P_{[0,1/3]} \cup P_{1/3}) \to (W,\partial W)
\]
for a suitable codimension one splitting $(\bar M;M_0 \cup \partial_0 P_{[0,1/3]} \cup P_{1/3},A)$, where the map
$M_0 \to \bar M$ is a weak equivalence. 

The rest of the proof 
is essentially identical to the proof of \cite[thm.~A]{Klein_compression} and we refer the reader to the latter for additional details:
Application of Lefschetz duality and the relative Hurewicz theorem 
shows that the map $\partial_0 P_{[0,1/3]} \cup P_{1/3}  \cup A \to \bar M$ is a weak equivalence. It follows
that there is an identification of codimension one splittings
 \[
 (\partial_0 P_{[0,1/3]} \cup P_{1/3}  \cup A;\partial_0 P,\partial_1 M) \simeq (M,\partial_0 P;\partial_1 M)
 \] 
 and the map $P_{1/3} \to \partial_0 P_{[0,1/3]} \cup P_{1/3}  \cup A$
is identified with a embedding of $P$ in $M$ extending the embedding $\partial_0 P \to \partial M$.
\end{proof}

\begin{rem} With respect to the numerical assumptions of Theorem \ref{bigthm:compression},
the link $\nu$  is {\it stable} in the sense that
the function
\[
E\: [P/\!\! /\partial_0 P, W]_{R(M)} \to \{P/\!\! /\partial_0 P, W\}_{R(M)}
\] 
is a bijection (cf.~\cite[add.~1.3]{Klein_compression}). 
\end{rem}

\section{Proof of Theorems \ref{bigthm:unstable-normal} and \ref{bigthm:Whitney}} \label{sec:unstable-normal}

Let $(P;\partial_0 P,\partial_1 P)$ and  $(M;\partial_0 M,\partial_1 M)$ be $d$-dimensional Poincar\'e triads
with $(\partial_0 P,\partial_{01} P) = (\partial_0 M,\partial_{01} M)$ 

Let $\nu_P$ denote the Spivak normal fibration of $(P;\partial P)$. 
The following result is a relative version of  \cite[thm.~A]{Klein_immersion}. The proof in the relative case follows
by a similar argument. We omit the details.

\begin{thm} \label{thm:Smale-Hirsch}  Let $f\: P\to M$ be a map of which restricts to the identity on $\partial_0 P$. Then 
$f$ is the underlies a Poincar\'e immersion if and only if there is a stable fiber homotopy equivalence $\nu_P \simeq f^\ast \nu_M $
which restricts to the identity on $\partial_0 P$.
\end{thm}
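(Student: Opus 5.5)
The plan is to follow the argument of \cite[thm.~A]{Klein_immersion}, keeping track of $\partial_0 P$ at every stage. The proof has two directions.

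\emph{Necessity.} Suppose $f$ underlies a relative immersion, that is, an embedding $P\times D^j \to M\times D^j$ relative to $(\partial_0 P)\times D^j$. The complement $C$ is a Poincar\'e triad, and we get a splitting
\[
M\times D^j \simeq (P\times D^j)\cup_{S^j_P\partial_1 P} C .
\]
Spivak fibrations are compatible with codimension zero splittings. So the restriction of $\nu_{M\times D^j}=\nu_M$ to $P\times D^j\simeq P$ is the Spivak fibration of $(P\times D^j,\partial)$, which is $\nu_P$. This gives a stable fiber homotopy equivalence $\nu_P\simeq f^*\nu_M$.

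The equivalence restricts to the identity on $\partial_0 P$. The reason is that the embedding restricted to $(\partial_0 P)\times D^j$ is the given identification $\partial_0 P=\partial_0 M$. Uniqueness of the Spivak fibration up to contractible choice, applied to the Poincar\'e pairs $(\partial_0 P,\partial_{01}P)$, then makes the identification on $\partial_0 P$ canonical. Concretely, I would use the construction of \cite{Klein_dualizing} through the dualizing spectrum. That construction is natural with respect to codimension zero inclusions, so the identification is functorial.

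\emph{Sufficiency.} This is the main direction. Given the stable equivalence $\nu_P\simeq f^*\nu_M$ rel $\partial_0 P$, I would first replace $f$ by its mapping cylinder and stabilize, taking $j$ large compared with $d$. In the stable range, codimension zero Poincar\'e embeddings of $P\times D^j$ in $M\times D^j$ relative to $(\partial_0 P)\times D^j$ are classified by stable fiberwise collapse data. By stable fiberwise duality over $M$, such a datum amounts to a fiberwise stable map
\[
M/\!\!/\partial_1 M \to P/\!\!/\partial_1 P \quad \text{rel } \partial_0 M/\!\!/\partial_{01} M
\]
that is of ``degree one'' in the sense of carrying fundamental classes. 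The bundle equivalence gives an identification of Thom spectra. Combined with fiberwise Atiyah duality for $(M;\partial_0 M,\partial_1 M)$ and $(P;\partial_0 P,\partial_1 P)$, this produces such a degree one fiberwise stable map relative to $\partial_0$. For $j\gg0$ the Fringe Lemma \ref{lem:fringe}, first part, shows that the stable map desuspends to an unstable one, since we are in the stable range. The complement $C$ is then the homotopy fiber (fiberwise cofiber) of this map. Its Poincar\'e duality follows from Lefschetz duality plus the degree one condition. Finally, one checks that the complement's boundary decomposes as $S^j_P\partial_1 P\cup \partial_1 M\times D^j$ compatibly with $\partial_0$.

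The main obstacle is the relative bookkeeping in this last step. We must make sure that the constructed complement is a genuine Poincar\'e triad whose boundary contains $\partial_1 M\times D^j$ unchanged, so that the embedding is honestly relative to $(\partial_0 P)\times D^j$. To handle this, I would apply the absolute result of \cite{Klein_immersion} to the doubled or boundary-collared situation. Specifically, glue a collar $\partial_0 P\times[0,1]$ to $P$, use the identity on $\partial_0 P$ to get a fixed embedding near the boundary, and then invoke the relative embedding theorem \cite[thm.~A]{Klein_haef2}, as in the proof of Theorem \ref{bigthm:compression}, to extend over the interior. Its connectivity hypotheses hold automatically once $j$ is large.
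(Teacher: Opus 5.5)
Your outline follows the route the paper's remark indicates: necessity from the induced trivialization of $\nu_P-f^*\nu_M$, sufficiency from a generalized Spanier--Whitehead duality. The paper omits the details and defers to the absolute case. Your necessity argument is fine. So is your first sufficiency step: fiberwise duality turns $\nu_P\simeq f^*\nu_M$ rel $\partial_0 P$ into a degree one fiberwise stable map $M/\!\!/\partial_1 M\to P/\!\!/\partial_1 P$ rel $\partial_0 M/\!\!/\partial_{01}M$, which desuspends after crossing with $D^j$.

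The gap is the next step: turning that unstable map into an actual embedding of the triad $(P\times D^j;\partial_0 P\times D^j,S^j_P\partial_1 P)$.

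\emph{The homotopy fiber.} Saying the complement is ``the homotopy fiber (fiberwise cofiber)'' of the map does not produce one. A fiberwise homotopy fiber in $R(M\times D^j)$ is a retractive space. Nothing you describe supplies the map $S^j_P\partial_1 P\to C$, the triad structure on $C$, or the equivalence $(P\times D^j)\cup C\simeq M\times D^j$ fixed on $\partial(M\times D^j)$.

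\emph{The repair.} Your fix via the relative embedding theorem used in the proof of Theorem~\ref{bigthm:compression} fails for a sharper reason. That theorem does not embed a prescribed Poincar\'e triad. It produces an embedding of \emph{some} thickening $(\bar K;\partial_0 P\times D^j,A)$ of a relative spine, and the free boundary piece $A$ is an output. You note its hypotheses hold for every $f$ once $j$ is large, so this step never uses $\nu_P\simeq f^*\nu_M$. If it yielded an embedding of $P\times D^j$ with boundary piece $S^j_P\partial_1 P$, every map rel $\partial_0 P$ would underlie an immersion. That contradicts your own necessity direction whenever $\nu_P\not\simeq f^*\nu_M$. Identifying $A$ with $S^j_P\partial_1 P$ is exactly where the hypothesis must enter, and you don't do it. Doubling along $\partial_0 P$ doesn't help either, since an immersion of the double need not respect the splitting along $\partial_0 P$.

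The missing mechanism is already in the paper, in the proof of Theorem~\ref{bigthm:unstable-normal}. Write $P'=P\times D^j$ and $M'=M\times D^j$, and let $\alpha\colon M'/\!\!/\partial_1 M'\to P'/\!\!/\partial_1 P'$ be your desuspended degree one map, fixed on $\partial_0 M'/\!\!/\partial_{01}M'$. Then:
\begin{enumerate}[(i).]
\item Take $W=P'/\!\!/\partial_1 P'$ itself, the target rather than a fiber, as the complement of $P'\times J$ in $M'\times I$.
\item Give it the boundary structure map that is the collapse on $S_{P'}\partial_1 P'$ and is $\alpha$ on $M'/\!\!/\partial_1 M'$.
\item These two pieces agree on the overlap precisely because $\alpha$ is rel $\partial_0$, so the relative bookkeeping you worried about is automatic.
\item The equivalence $P'_J\cup W\simeq M'_I$ holds for any such $\alpha$.
\item The degree one property is exactly what gives Poincar\'e duality for $(W,\partial W)$.
\end{enumerate}
The result is an embedding $P\times D^{j+1}\to M\times D^{j+1}$ rel $\partial_0 P\times D^{j+1}$, i.e.\ an immersion, with no compression step needed.
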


\begin{rem}  A Poincar\'e immersion of $f$, as in Theorem \ref{thm:Smale-Hirsch}, determines a stable fiber homotopy trivialization
of the virtual spherical fibration $\nu_P - f^\ast \nu_M$ and therefore a stable fiber homotopy equivalance  $\nu_P \simeq f^\ast \nu_M $.
This proves the theorem in one direction. The other direction follows from of a generalization of Spanier-Whitehead duality.
\end{rem}

\begin{proof}[Proof of Theorem \ref{bigthm:unstable-normal}]   
The proof is essentially the same as that of \cite[thm.~A]{Klein_compression} with a few notational changes.
We will omit the proof of the last part. It is clear that if the immersion $f\: P \to M$ compresses, then $f^!$ is in the image of $E$.
Conversely, suppose that 
 \[
 \alpha\in [M/\!\!/\partial_1 M,P/\!\!/\partial_1 P; \rel \partial_0 M/\!\!/\partial_{01}M ]_{R(M)}
 \] 
 satisfies $E(\alpha) = f^!$. We will explain how $f$ and $\alpha$ determine an embedding $P_J \to M_I$ relative
 to $(\partial_0 P)_J$ and 
 having a trivial link. Assuming this to be the case,  the result will follow from Theorem \ref{bigthm:compression}.
 
 We first construct the embedding $P_J \to M_I$:  We will produce an object $W$ in
 the left fiber of the functor
 \[
 \kappa^{P_J}\: \cal I^h((\partial_1 P)_J \cup_{(\partial_{01} P)_J} (\partial_1 M)_I) \to \cal I^h((\partial M)_I) \, .
 \]
 The underlying space of the object $W$ is given by $P/\!\! /\partial_1 P = P \cup_{\partial_1 P} M$.
 The structure map
 \[
 \partial_1 (P_J) \cup_{\partial_{01} (P_J)} \partial_1(M_I) \to P/\!\!/\partial_1 P 
 \]
 is defined as follows: On the summand $\partial_1 (P_J)  := S_P \partial_1 P$, we
 take it to be the map $S_P \partial_1 P \to P/\!\!/\partial_1 P$ which collapses the right summand $P \subset S_P \partial_1 P$ to $M$, i.e.,
 it is the evident map of double mapping cylinders
 \[
 P_0 \cup (\partial_1 P)_I \cup P_1  @>>>  P_0 \cup (\partial_1 P)_I \cup M\, .
 \]
 On the other summand $\partial_1 (M_I) = M/\!\!/\partial_1 M$, use any representative of the homotopy class
  $\alpha\in [M/\!\!/\partial_1 M,P/\!\!/\partial_1 P; \rel \partial_0 M/\!\!/\partial_{01} M]_{R(M)}$. Then the maps on each summand
  automatically agree on  the overlap $S_{\partial_0 P} \partial_{01} P =  S_{\partial_0 M} \partial_{01} M$, since
$\alpha$ is fixed on $\partial_0 M/\!\!/\partial_{01} M$. 

Next, we observe that the commutative square of pairs\footnote{The upper right corner of the square is given by $\alpha$; 
here we have ignored the fact that $\alpha$ is not an inclusion. The repair this, one
should replace the pairs in the square with mapping cylinder pairs.}
\[
\xymatrix{
(\partial_1 (P_J),S_{\partial_0 P} \partial_{01} P) \ar[r] \ar[d] & (P/\!\!/\partial_1 P,M/\!\!/ \partial_1 M) \ar[d] \\
(P_J, (\partial_0 P)_J) \ar[r] & (M_I,\partial (M_I))
}
\]
is a homotopy pushout. We infer that gluing in $P_J$ yields a weak equivalence  $P/\!\!/\partial_1 P \cup_{\partial_1 P_J}  P_J \simeq M_I$
which is standard when restricted to
\[
(\partial_0 P)_J \cup M/\!\!/ \partial_1 M = (\partial_0 M)_I \cup  S_M (\partial_1 M) = \partial (M_I)\, .
 \]
 Set $\partial W = (\partial_1 P)_J \cup \partial_1 (M_I) = (\partial_1 P)_J \cup M/\!\!/\partial_1 M$.
Then Poincar\'e duality for the pair $(W,\partial W)$ is a straightforward consequence of a relative version of 
\cite[lem.~2.3]{Klein_haef};  the details are left to the reader. This completes the construction of
the embedding $P_J \to M_I$.
 
 By construction, the associated pairwise link
\[
(P,\partial_0 P) \to (P/\!\!/\partial_1 P,\partial_0 P/\!\!/\partial_{01} P)
\]
factors through $(M,M)$ and is consequently 
 trivial. As the pairwise link determines the link $\nu\in [P/\!\!/\partial_0 P, P/\!\!/\partial_1 P]_{R(M)}$, the latter is trivial.
 By Theorem \ref{bigthm:compression}, up to concordance, the relative embedding $P\times J \to M\times I$ compresses to a Poincar\'e embedding $P \to M$.
\end{proof}
 
\begin{proof}[Proof of Theorem \ref{bigthm:Whitney}] Since $p \le d-3$ and $3p+4 \le 2d$, 
by Theorem \ref{bigthm:unstable-normal} it suffices to show that the function
\[
\mbox{\small 
$E\: [M/\!\!/\partial_1 M,P/\!\!/\partial_1 P; \rel \partial_0 M/\!\!/\partial_{01}M]_{R(M)} \to \{M/\!\!/\partial_1 M,P/\!\!/\partial_1 P; \rel \partial_0 M/\!\!/\partial_{01}M\}_{R(M)} 
$}
\]
is surjective. Since $(P;\partial_0 P,\partial_1 P)$ has homotopy spine dimension $\le p$,
the object $P/\!\!/\partial_1 P\in R(M)$ is $(d-p-1)$-connected. 
Moreover, $\dim_{\partial M_1^+}  M^+ \le d$.
Then the result follows from the first part of Lemma \ref{lem:fringe}, since
$d \le 2(d-p-1) + 1$ is equivalent to $2p+1 \le d$.
\end{proof}

\section{Proof of Theorem \ref{bigthm:handle}} \label{sec:handle}

\begin{proof}[Proof Theorem \ref{bigthm:handle}]
Start with  any   immersion 
 \[
 (f,\phi_0)\: (H,\partial_0 H) \to (M,\partial M)
 \] representing $[\alpha]$, where
 $\phi_0$ is an embedding. Then one has a $\pi$-equivariant stable collapse map of pairs
 \[
 (f^!,\phi_0^!) \: (\tilde M_+,\partial_1 \tilde M_+) \to (\tilde H/\partial_1 \tilde H, \partial_0 \tilde H/\partial_{01} \tilde H)\, ,
 \]
 where $\pi = \pi_1(M)$, $\tilde M \to M$ is the universal cover, and  if $Y \to M$ is a map, we are setting $\tilde Y := Y \times_M \tilde M$.
 
 The map $\phi_0^!$ is exists unstably. Therefore,  we have
 \[
 f^! \in \{\tilde M_+,\tilde H/\partial_1 \tilde H; \rel \partial_1 \tilde M_+\}_{T_\ast(\pi)} \, .
 \]
 Note that the above $\pi$-equivariant stable collapse is the image of the fiberwise stable collapse of the introduction (also denoted by $f^!$)
 with respect to the function
  \[
({-})^\sharp \:  \{M^+, H/\!\!/\partial_1 H; \rel \partial M^+\}_{R(M)} \to  \{\tilde M_+,\tilde H/\partial_1 \tilde H; \rel \partial \tilde M_+\}_{T_{\ast}(\pi)}
 \]
 of \S\ref{subsec:retract-desuspend}. We also observe that there is a preferred equivariant weak equivalence
 \[
 \tilde H/\partial_1 \tilde H \simeq S^k \smsh (\pi_+) \, .
 \]
 

By Theorem \ref{bigthm:unstable-normal} and Lemma \ref{lem:fringe}, the obstruction to compressing the immersion 
 $(f,\phi_0)$ to an embedding  is given by the relative Hopf invariant 
  \[
 H_{\partial \tilde M_+}(f^!) \in \{\tilde M/\partial \tilde M,D_2(S^k \smsh (\pi_+))\}_{T_\ast(\pi)}
 \]
 of \eqref{eqn:rel-Hopf}.
 The strategy is then to show that there exists an immersion 
 \[
 (\phi;\phi_0,\phi_1)\: (\partial_0 H \times I ;\partial_0 H \times \{0\}, \partial_0 H \times \{1\})
 \to (\partial M \times I; \partial M \times \{0\},\partial M \times \{1\})\, ,
 \]
 such that
 \begin{enumerate}[(i).]
 \item $\phi_1$ is an embedding, and
 \item the immersion
  \[
 f\cup\phi\: H \cup_{\partial_0 H \times \{0\} } (\partial_0 H \times I) \to M \cup_{\partial M} \cup (\partial M \times I)\, ,
 \]
 obtained by gluing $\phi$ to $f$, compresses to an embedding (the immersion $f\cup \phi$ is depicted in figure \ref{fig:stacking}).
 \end{enumerate}
 Then $f\cup \phi$ is an embedding that
 also represents $\alpha$.
 
 
\begin{figure}
\centering
\begin{tikzpicture}[scale=.5]

\fill[blue!10] (-1,0) rectangle (5,4);
\draw[thick] (-1,0) rectangle (5,4);

\draw[decorate,decoration={brace, raise=6pt}]
  (-1,0) -- (-1,4)
  node[midway,xshift=-15pt] {\tiny $M$};
 
\draw[decorate,decoration={brace,raise=6pt}]
  (-1,-3) -- (-1,0)
  node[midway,xshift=-26pt] {\tiny $\partial M {\times} I$};

\draw[line width=2pt, white!30!cyan, draw opacity=0.6, smooth]
  (0.8,0)
  .. controls (0.4,2) and (1.6,3) ..
  (2,2)
  .. controls (2.6,1) and (1.4,1) ..
  (1.6,2.2)
  .. controls (1.2,3.6) and (2.8,3.6) ..
  (2.4,2.2)
  .. controls (2.6,1) and (3.6,2) ..
  (3.2,0);

\fill[magenta!10] (-1,0) rectangle (5,-3);
\draw[thick] (-1,0) rectangle (5,-3);

\draw[line width=2pt, white!30!magenta, draw opacity=0.6, smooth]
  (0.8,0)
  .. controls (2.2,-0.6) and (2.2,-1.8) ..
  (1.2,-1.6)
  .. controls (0.2,-1.4) and (1.8,-0.8) ..
  (2.6,-1.4)
  .. controls (3.2,-2.0) and (1.6,-2.4) ..
  (0.8,-3);

\draw[line width=2pt, white!30!magenta, draw opacity=0.6, smooth]
  (3.2,0)
  .. controls (2.1,-0.4) and (4.1,-1.0) ..
  (2.9,-1.4)
  .. controls (1.8,-1.8) and (4.2,-2.1) ..
  (3.3,-2.4)
  .. controls (2.7,-2.7) and (3.0,-2.9) ..
  (3.2,-3);

\end{tikzpicture}
\caption{A smooth manifold immersion that schematically depicts the Poincar\'e immersion $f\cup \phi$ appearing
in the proof of Theorem \ref{bigthm:handle}. The bottom rectangle 
is an immersion $\phi\: \partial_0 H {\times} I \to \partial M {\times} I$ which restricts to an embedding $\partial_0 H {\times} \partial I \to \partial M {\times} \partial I$. 
The top rectangle is  an immersion $f\: H \to M$. The immersions coincide at the embedding $\phi_0\: \partial_0 H {\times} \{0\} \to \partial M {\times} \{0\}$.
In the figure, $\partial M \times \{0\}$ is the edge where the rectangles meet and  $\partial M \times \{1\}$ is the bottom edge of the lower rectangle.}
\label{fig:stacking}
\end{figure}
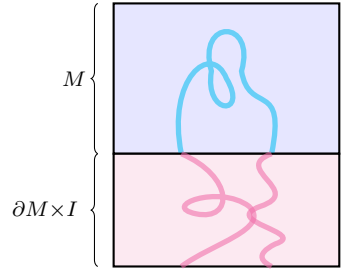
 
The  argument will make use of the commutative square of spaces
\[
 \xymatrix{
E(\partial_0 H,\partial M) \ar[r] \ar[d]  & F(\partial \tilde M_+,\partial_0 \tilde H/\partial_{01}\tilde H)_{T_\ast(\pi)}\ar[d]\\
I(\partial_0 H,\partial M) \ar[r] & F^{\text{st}}(\partial \tilde M_+,\partial_0 \tilde H/\partial_{01}\tilde H)_{T_\ast(\pi)} 
}
 \]
 in which 
 \begin{enumerate}[(i).] 
 \item $E(\partial_0 H,\partial M)$ is the space of  embeddings $\partial_0 H \to \partial M$,
 \item $I(\partial_0 H,\partial M)$ is the space of immersions $\partial_0 H \to \partial M$,
 \item $F(\partial \tilde M_+,\partial_0 \tilde H_+)$ is the space of equivariant maps $\partial \tilde M_+ \to \partial_0 \tilde H_+$,
 \item $F^{\text{st}}(\partial \tilde M_+,\partial_0 \tilde H_+)$ is the space of equivariant stable maps $\partial \tilde M_+ \to \partial_0 \tilde H_+$, 
 \item the vertical maps are evident, and 
 \item the horizontal maps are given by the equivariant collapse.
  \end{enumerate}
 By \cite[\S4]{Klein_compression} (cf.~ Lemma \ref{lem:fringe}), the square is $0$-Cartesian if $k \ge 4$. By definition, the homotopy fiber of the left vertical map of square
 taken at $\phi_0$ is identified with the space of immersions $ (\phi;\phi_0,\phi_1)$ as above. Denote this homotopy fiber by
 \[
EI_{\phi_0}(\partial_0 H,\partial M) \, .
 \]
The next step will be to modify the right map square to obtain another $0$-Cartesian square.
 Since the map 
 $\partial_0 \tilde H/\partial_{01} \tilde H \to \tilde H/\partial_{1} \tilde H$ is $(2k-1)$-connected, if we replace 
 $\partial_0 \tilde H/\partial_{01} \tilde H$ by $\tilde H/\partial_{1} \tilde H \simeq S^k \smsh (\pi_+)$ in the square, the resulting square
\[
 \xymatrix{
E(\partial_0 H,\partial M) \ar[r] \ar[d]  & F(\partial \tilde M_+,S^k \smsh (\pi_+))_{T_\ast(\pi)}\ar[d]\\
I(\partial_0 H,\partial M) \ar[r] & F^{\text{st}}(\partial \tilde M_+,S^k \smsh (\pi_+))_{T_\ast(\pi)} 
}
 \]
 is still $0$-cartesian. Next, we use \eqref{eqn:EHP} to obtain a $(k-1)$-Cartesian square of function spaces
 \[
  \xymatrix{
 F(\partial \tilde M_+,S^k \smsh (\pi_+))_{T_\ast(\pi)} \ar[r] \ar[d] &  F(\partial \tilde M_+,C(S^k \smsh (\pi_+)))_{T_\ast(\pi)} \ar[d] \\
  F^{\text{st}}(\partial \tilde M_+,S^k \smsh (\pi_+))_{T_\ast(\pi)} \ar[r]_(.45){h} & F(\partial \tilde M_+,D_2(S^k \smsh (\pi_+)))_{T_\ast(\pi)}
 }
 \]
 The homotopy fiber of the right vertical map of the latter square at the constant map is identified with $F(\Sigma \partial \tilde M_+,D_2(S^k \smsh (\pi_+)))_{T_\ast(\pi)}$.
 
By assembling the above, we obtain $0$-Cartesian square
\[
 \xymatrix{
E(\partial_0 H,\partial M) \ar[r] \ar[d]  & F(\partial \tilde M_+,C(S^k \smsh (\pi_+)))_{T_\ast(\pi)} \ar[d]\\
I(\partial_0 H,\partial M) \ar[r] & F(\partial \tilde M_+,D_2(S^k \smsh (\pi_+)))_{T_\ast(\pi)}\, .
}
 \]
Using this last square, we take vertical homotopy fibers at the basepoint and then path components. This results in a surjection 
\begin{equation} \label{eqn:map-of-fibers}
   \psi\:  \pi_0(EI_{\phi_0}(\partial_0 H,\partial M))  \to \{\Sigma \partial \tilde M_+,D_2(S^k \smsh (\pi_+))\}_{T_\ast(\pi)}
 \end{equation}
 when $k \ge 4$. In fact, the function \eqref{eqn:map-of-fibers} has the following concrete description: It coincides the composition
 \[
\xymatrix{
\pi_0(EI_{\phi_0}(\partial_0 H,\partial M))  \ar[r] &
\{(\partial \tilde M \times I)_+,S^k \smsh (\pi_+);(\partial \tilde M \times \partial I)_+\rel \, (\partial \tilde M \times\{0\})_+\}_{T_\ast(\pi)} \ar[d]^{H_{(\partial \tilde M \times \partial I)_+}}\\
&  \{\Sigma (\partial \tilde M_+),D_2(S^k \smsh (\pi_+))\}_{T_{\ast}(\pi)}
}
\]
where the horizontal map is the composition of the equivariant collapse of an immersion $(\phi,\phi_0,\phi_1)$ 
 with the map $\partial_0 \tilde H/\partial_{01} \tilde H \to \tilde H/\partial_1 \tilde H \simeq S^k \smsh (\pi_+)$
  and the vertical map is given by the relative Hopf invariant.
 
 Let 
 \[
 \delta^\ast\: \{\Sigma (\partial\tilde M_+), D_2(S^k \smsh (\pi_+)\}_{T_\ast(\pi)} \to \{\tilde M/\partial \tilde M, D_2(S^k \smsh (\pi_+)\}_{T_\ast(\pi)}
 \]
 be given by $\alpha \mapsto \alpha\circ \delta$, where $\delta$ is the Barratt-Puppe connecting map of the equivariant cofiber sequence $\partial \tilde M_+ \to \tilde M_+ \to \tilde M/\partial \tilde M$. Since $M$ has the homotopy type of a CW complex of dimension $\le 2k-1$, it follows from elementary obstruction theory that 
$\delta_\ast$ is surjective. We infer that the composition
\[
\delta_\ast \circ \psi \: \pi_0(EI_{\phi_0}(\partial_0 H,\partial M))  \to \{\tilde M/\partial \tilde M, D_2(S^k \smsh (\pi_+)\}_{T_\ast(\pi)}
\]
is also surjective. Let $(\phi,\phi_0,\phi_1) \in EI_{\phi_0}(\partial_0 H,\partial M)$ be an element such that
$\delta_\ast \circ \psi(\phi,\phi_0,\phi_1) = -H_{\partial \tilde M_+}(f^!)$.

Then 
\begin{align*}
H_{\partial \tilde M_+}((f\cup_{\phi_0} \phi)^!) &= H_{(\partial \tilde M \times \{1\})_+ }(f^! \cup_{\phi_0^!} \phi^!)\, , \\
&= H_{\partial \tilde M_+}(f^!) + H_{(\partial \tilde M \times \partial I)_+}(\phi^!)\circ \delta  \quad \text{ by Lemma } \ref{lem:gluing},\\
&= H_{\partial \tilde M_+}(f^!) + \delta_\ast \circ \psi(\phi,\phi_0,\phi_1)\, , \\
&= 0\, ,
\end{align*}
where  in the first line we have used the evident identification $(f\cup_{\phi_0} \phi)^! = f^! \cup_{\phi_0^!} \phi^!$.
The result then follows by applying Lemma \ref{lem:desuspension}, Lemma \ref{lem:fringe},
and Theorem \ref{bigthm:unstable-normal} to the immersion $f\cup_{\phi_0} \phi$ 
with $\Theta(f^!,\phi^!) = (f\cup_{\phi_0} \phi)^!$.
\end{proof}


\end{document}